\newtheorem{theorem}{Theorem}[section]
\newtheorem{prop}[theorem]{Proposition}
\newtheorem{thm}[theorem]{Theorem}
\newtheorem{lemma}[theorem]{Lemma}
\theoremstyle{definition} 
\newtheorem{defn}[theorem]{Definition}
\newcommand{\cU}{\mathcal{U}}
\newcommand{\cP}{\mathcal{P}}
\newcommand{\CC}{\mathbb{C}}
\newcommand{\qu}{/\kern-.7ex/}
\newcommand{\lqu}{\backslash \kern-.7ex \backslash}
\newcommand{\on}{\operatorname}
\newcommand{\Hom}{\on{Hom}}
\newcommand{\Pic}{\on{Pic}}
\newcommand{\NE}{\overline{\on{NE}}}
\newcommand{\bbox}{\on{Box}} 
\newcommand{\age}{\on{age}} 
\newcommand{\X}{\mathcal{X}(\boldsymbol{\Sigma})} 
\newcommand{\PX}{^{P}\mathcal{X}(\boldsymbol{\Sigma})} 
\newcommand{\QQ}{\mathbb{Q}} 
\newcommand{\CR}{\on{CR}}
\newcommand{\tw}{\on{tw}}
\newcommand{\inv}{\on{inv}}
\newcommand{\val}{\on{val}}
\begin{document}

\title[The Quantum Orbifold Cohomology of Toric Stack Bundles]{The Quantum Orbifold Cohomology of \\Toric Stack Bundles}

\author[Y. Jiang]{Yunfeng Jiang}
\address{Department of Mathematics\\ University of Kansas\\ 405 Snow Hall  1460 Jayhawk Blvd.\\ Lawrence, KS 66045, USA}
\email{y.jiang@ku.edu}

\author[H.-H. Tseng]{Hsian-Hua Tseng} 
\address{Department of Mathematics\\ Ohio State University\\ 100 Math Tower\\ 231 West 18th Ave.\\Columbus, OH 43210, USA}
\email{hhtseng@math.ohio-state.edu}

\author[F. You]{Fenglong You} 
\address{Department of Mathematics\\ Ohio State University\\ 100 Math Tower\\ 231 West 18th Ave.\\Columbus, OH 43210, USA}
\email{you.111@osu.edu}

\keywords{}
\date{\today}

\begin{abstract} 
We study Givental's Lagrangian cone for the quantum orbifold cohomology of toric stack bundles. Using Gromov-Witten invariants of the base and combinatorics of the toric stack fibers, we construct an explicit slice of the Lagrangian cone defined by the genus $0$ Gromov-Witten theory of a toric stack bundle.
\end{abstract}
\maketitle 

\tableofcontents

\section{Introduction}
An important problem in Gromov-Witten theory is the computations of Gromov-Witten invariants of orbifolds. Genus $0$ Gromov-Witten invariants of toric stacks can be determined via a Givental-style mirror theorem proven in \cite{CCIT} and \cite{CCK}, while their higher genus Gromov-Witten invariants can be determined by Givental-Teleman reconstruction of semi-simple CohFTs \cite{Giv1}, \cite{Tel}. Toric bundles over a base $B$ are studied in \cite{SU}, where their cohomology rings were computed. Assuming knowledge about Gromov-Witten invariants of $B$, genus $0$ Gromov-Witten invariants of a toric bundle over $B$ can be determined via the mirror theorem in \cite{Brown}, while their higher genus Gromov-Witten invariants can be determined from genus $0$ invariants and localization \cite{CGT}.

Toric stack bundles, introduced by Jiang \cite{Jiang}, generalize toric bundles by using toric Deligne-Mumford stacks as fibers. The main result of this paper, Theorem \ref{main-theorem}, is a mirror theorem for toric stack bundles $\mathcal{P}\to B$. Roughly speaking, Theorem \ref{main-theorem} gives an explicit slice, the extended $I$-function, of the Lagrangian cone $\mathcal{L}_\mathcal{P}$ of the genus $0$ Gromov-Witten theory of $\mathcal{P}$, which can be used to determine all genus $0$ Gromov-Witten invariants of $\mathcal{P}$ following \cite{Giv2}, assuming that genus $0$ Gromov-Witten invariants of $B$ are known. 

Theorem \ref{main-theorem} generalizes the mirror theorems in \cite{Brown}, \cite{CCIT}. Our proof of Theorem \ref{main-theorem} follows the same approach as those in \cite{Brown}, \cite{CCIT}: localization yields a characterization result of the Lagrangian cone $\mathcal{L}_\mathcal{P}$, see Theorem \ref{characterization}. We prove that the extended $I$-function lies on $\mathcal{L}_\mathcal{P}$ by checking the conditions \textbf{(C1)}-\textbf{(C3)} in Theorem \ref{characterization}. The verification of \textbf{(C3)} for toric stack bundles involves a novel point. \textbf{(C3)} concerns fixed points of the fiberwise torus action on $\mathcal{P}$. Components of the fixed loci are abelian gerbes over the base $B$. To check \textbf{(C3)}, we need to know Gromov-Witten theory of certain abelian gerbes over $B$. Fortunately these were previously solved in \cite{AJT09}. 

The result in this paper will have applications to study birational transformation of orbifold Gromov-Witten invariants. 
An important class of crepant birational  transformation of varieties is {\em flops}. 
In the study of ordinary flops as in \cite{LLW}, the local models, which are  toric bundles over a base scheme $B$ with fibre the projective bundle over a projective space, played an important role in the proof of invariance of genus zero Gromov-Witten invariants.  A special example in our case is a toric stack bundle with fibre the weighted projective bundle over a weighted projective stack, which is the local model of {\em ordinary orbifold flop}. Theorem \ref{main-theorem} will play a crucial role to prove the crepant transformation conjecture for ordinary orbifold flops. 

The rest of the paper is organized as follows. Section \ref{sec:GW} contains a brief review of genus $0$ Gromov-Witten theory. Section \ref{sec:toric_stack} contains a review about toric stacks and related materials. The construction of toric stack bundles is recalled in Section \ref{sec:toric_bdle}. In Section \ref{sec:characterization} we apply localization to derive a characterization result of the Lagrangian cone for toric stack bundles. The main result is then proven in Section \ref{sec:proof_main_thm}. 

Throughout this paper, we work over $\mathbb{C}$. 

\subsection*{Acknowledgment} 
We thank the referees for valuable comments and suggestions. H.-H. T. thanks T. Coates, A. Corti, and H. Iritani for related collaborations. Y. J. thanks T. Coates, A. Corti and R. Thomas for the support at Imperial College London. Y. J. and H.-H. T. are supported in part by Simons Foundation Collaboration Grants. F. Y. was supported by a Presidential Fellowship of the Ohio State University during the revision of this paper. 
 
\section{Preparatory materials}

\subsection{Gromov-Witten theory}\label{sec:GW}
We give a very brief account on Gromov-Witten theory. The materials we need are discussed in more details in \cite[Section 2]{CCIT}, to which we refer the reader. 

Let $\mathcal{X}$ be a smooth proper Deligne-Mumford stack with projective\footnote{In the presence of a torus action, we may allow $\mathcal{X}$ to be only semi-projective.} coarse moduli space $X$. The {\em Chen-Ruan orbifold cohomology} $H_{\text{CR}}^*(\mathcal{X})$ of $\mathcal{X}$ is additively the cohomology of the inertia stack $\mathcal{IX}:=\mathcal{X}\times_{\mathcal{X}\times \mathcal{X}}\mathcal{X}$, where the fiber product is taken over the diagonal. The grading of $H_{\text{CR}}^*(\mathcal{X})$ is the usual grading on cohomology shifted by {\em age}. $H_{\text{CR}}^*(\mathcal{X})$ is also equipped with a non-degenerate pairing $(-,-)_{\text{CR}}$ called orbifold Poincar\'e pairing. 

Gromov-Witten invariants of $\mathcal{X}$ are defined as the following intersection numbers:
\[
\langle a_1\bar{\psi}_1^{k_1},...,a_n\bar{\psi}_n^{k_n}\rangle_{g,n,d}:=\int_{[\overline{\mathcal{M}}_{g,n}(\mathcal{X}, d)]^w}(\on{ev}_1^*a_1)\bar{\psi}_1^{k_1},...,\on{ev}_n^*(a_n)\bar{\psi}_n^{k_n},
\]
where
\begin{itemize}
\item
$\overline{\mathcal{M}}_{g,n}(\mathcal{X}, d)$ is the moduli stack of $n$-pointed genus $g$ degree $d$ stable maps to $\mathcal{X}$ with sections to all marked gerbes. 

\item
$[\overline{\mathcal{M}}_{g,n}(\mathcal{X}, d)]^w\in H_*(\overline{\mathcal{M}}_{g,n}(\mathcal{X}, d), \mathbb{Q})$ is the weighted virtual fundamental class, which is a multiple of the usual virtual fundamental class. More details can be found in \cite[Section 4.6]{AGV} and \cite[Section 2.5.1]{Tseng}.

\item
For $i=1,...,n$, $\on{ev}_i: \overline{\mathcal{M}}_{g,n}(\mathcal{X}, d)\to \mathcal{IX}$ is the evaluation map.

\item
For $i=1,...,n$, $\bar{\psi}_i\in H^2(\overline{\mathcal{M}}_{g,n}(\mathcal{X}, d), \mathbb{Q})$ are the descendant classes.

\item
$a_1,..., a_n\in H^*(\mathcal{IX})$. 
\end{itemize}
Gromov-Witten invariants can be packaged into generating functions, as follows. The genus $g$ Gromov-Witten potential of $\mathcal{X}$ is 
$$\mathcal{F}_\mathcal{X}^g(\mathbf{t}):=\sum_{n, d} \frac{Q^d}{n!}\langle \mathbf{t},...,\mathbf{t}\rangle_{g,n,d},$$
where $Q^d$ is an element in the Novikov ring of $\mathcal{X}$, $\mathbf{t}=\mathbf{t}(z)=t_0+t_1z+t_2 z^2+...\in H_{\text{CR}}^*(\mathcal{X})[z]$, and $\langle \mathbf{t},...,\mathbf{t}\rangle_{g,n,d}:=\sum_{k_1,...,k_n}\langle t_{k_1}\bar{\psi}^{k_1},...,t_{k_n}\bar{\psi}^{k_n} \rangle_{g,n,d}$.

We briefly recall the Givental's formalism about the orbifold Gromov-Witten invariants in terms of a Lagrangian cone in certain symplectic vector space, which was developed in \cite{Tseng}. Let 
\[
\mathcal{H}:=H^*_{\text{CR}}
(\mathcal {X},\mathbb{C})\otimes \mathbb{C}[[\NE(\mathcal{X})]][[z,z^{-1}]],
\]
where $\NE(\mathcal{X})$ is the Mori cone of $\mathcal{X}$. There is a $\mathbb{C}[[\NE(\mathcal{X})]]$-valued symplectic form
\[
\Omega(f,g):=Res_{z=0}(f(-z),g(z))_{\text{CR}}dz,
\]
where $(-,-)_{\text{CR}}$ is the orbifold Poincar\'e pairing. 
Let 
\[
\mathcal{H}_{+}=
H^*_{CR}(\mathcal{X},\mathbb{C})\otimes 
\mathbb{C}[[\NE(\mathcal{X})]][[z]] 
\text{ and } \mathcal{H}_{-}=
z^{-1}H^*_{CR}(\mathcal{X},\mathbb{C})\otimes 
\mathbb{C}[[\NE(\mathcal{X})]][[z^{-1}]].
\]
Then $\mathcal{H}=\mathcal{H}_{+}\oplus \mathcal{H}_{-}$ and one can think of $\mathcal{H}=T^*(\mathcal{H}_{+})$.

The graph of the differential of $\mathcal{F}_{\mathcal{X}}^0$, in the dilaton-shifted coordinates, defined a Lagrangian submanifold $\mathcal{L}_\mathcal{X}$ inside the symplectic vector space $\mathcal{H}$, more explicitly, 
\[
\mathcal{L}_{\mathcal{X}}:=
\{
(p,q)\in \mathcal{H}_{-}\oplus\mathcal{H}_{+}|p=d_{q}\mathcal{F}^{0}_{\mathcal{X}}
\}\subset\mathcal{H}.
\]
Tautological equations for genus $0$ Gromov-Witten invariants imply that $\mathcal{L}_\mathcal{X}$ is a cone ruled by a {\em finite dimensional} family of affine subspaces. A particularly important finite-dimensional slice of $\mathcal{L}_\mathcal{X}$ is the {\em $J$-function}:
$$J_\mathcal{X}(t,z):=1z+t+\sum_{n, d}\sum_\alpha \frac{Q^d}{n!}\langle t,...,t, \frac{\phi_\alpha}{z-\bar{\psi}}\rangle_{0,n+1,d}\phi^\alpha,$$
where $t=\sum_\alpha t^\alpha \phi_\alpha\in H^*_{\text{CR}}(\mathcal{X})$ and $\{\phi_\alpha\}, \{\phi^\alpha\}\subset H_{\text{CR}}^*(\mathcal{X})$ are additive bases dual to each other under $(-,-)_{\text{CR}}$. 

Another important slice of $\mathcal L_{\mathcal X}$ is given by the $S$-operator:
\begin{equation}\label{S-operator}
S_{\mathcal X}(t,z)(\gamma):= \gamma+\sum_{n, d}\sum_\alpha \frac{Q^d}{n!}\langle t,...,t,\gamma, \frac{\phi_\alpha}{z-\bar{\psi}}\rangle_{0,n+2,d}\phi^\alpha,
\end{equation}
where $\gamma\in H^*_{CR}(\mathcal X;\mathbb Q)$.

 The discussion here extends with little efforts to equivariant and twisted settings. 
 
\subsection{Preliminaries on toric stacks}\label{sec:toric_stack}
In this section we collect some basic materials concerning toric stacks. Our presentation closely follows \cite[Section 3]{CCIT}.

\subsubsection{Construction}
A toric Deligne-Mumford stack is defined by 
a stacky fan $\boldsymbol{\Sigma} =(\textbf{N},\Sigma,\rho)$, 
where 
\begin{itemize}
\item
$\textbf{N}$ is a finitely generated abelian group of rank $r$; 
\item
$\Sigma \subset \textbf{N}_{\mathbb{Q}}=\textbf{N}\otimes_{\mathbb{Z}}\mathbb{Q}$ 
is a rational simplicial fan;
\item
$\rho:\mathbb{Z}^{n}\rightarrow \textbf{N}$ is a map given by $\{\rho_1,\cdots, \rho_n\}\subset \mathbf{N}$, which is assumed to have finite cokernel. The $\rho_i$'s are vectors determining the rays of the stacky fan.
\end{itemize}
Let $\bar{\rho_{i}}$ be the image of $\rho_{i}$ under the natural map $\textbf{N} \rightarrow \textbf{N}_{\mathbb{Q}}$. 
 
The {\em fan sequence} is
\begin{equation}
0 \longrightarrow \mathbb{L}:=\text{ker}(\rho) \longrightarrow \mathbb{Z}^{n} \stackrel{\rho}{\longrightarrow} \textbf{N}.
\end{equation}

Let $\rho^{\vee}: (\mathbb{Z}^{*})^{n}\rightarrow \mathbb{L}^{\vee}$ 
be the Gale dual of $\rho$, where $\mathbb{L}^{\vee}:=H^1(\on{Cone}(\rho)^*)$ is an extension of $\mathbb{L}^{*}=\Hom(\mathbb{L},\mathbb{Z})$ 
by a torsion subgroup. More details can be found in \cite{BCS}. The  {\em divisor sequence} is
\begin{equation}
0\longrightarrow\textbf{N}^{*}\stackrel{\rho^{*}}{\longrightarrow} (\mathbb{Z}^{*})^{n}\stackrel {\rho^{\vee}}{\longrightarrow}\mathbb{L}^{\vee}.
\end{equation}

Applying $\Hom_{\mathbb{Z}}(-,\mathbb{C}^{\times})$ 
to the dual map $\rho^{\vee}$ yields a homomorphism 
\begin{equation*}
\alpha:G\rightarrow (\mathbb{C}^{\times})^{n}, \quad 
\text{where} \quad G:=\Hom_{\mathbb{Z}}(\mathbb{L}^{\vee},\mathbb{C}^{\times}),
\end{equation*}
and we let $G$ act on $\mathbb{C}^{n}$ via this homomorphism.

For $I\subset\{1,2,\cdots,n\}$, let $\sigma_I$ be the cone generated by 
$\overline{\rho}_i, i\in I$ and let $\overline{I}$ be the complement of $I$ in $\{1,2,\cdots,n\}$.  The collection of {\em anti-cones} $\mathcal{A}$ is defined as follows: 
\[
\mathcal{A}:=\left\{I\subset \{1,2,\cdots, n\}: \sigma_{\overline{I}}\in \Sigma \right\}.
\]
For $I\subset \{1,...,n\}$, define \[
\mathbb{C}^{I}=\left\{ (z_{1},\ldots,z_{n}):z_{i}=0
\text{ for } i \not\in I\right\}.
\]
 Let $\cU$ be the open subset of $\mathbb{C}^{n}$ defined as
\begin{equation*}
\mathcal{U}:=\mathbb{C}^{n}\setminus \cup_{I\not\in \mathcal{A}}\mathbb{C}^{I}. 
\end{equation*}

\begin{defn}[see \cite{BCS},  \cite{Iritani}] 
The toric Deligne-Mumford stack $\mathcal{X}(\boldsymbol{\Sigma})$ 
is defined as the quotient stack 
\[
\mathcal{X}(\boldsymbol{\Sigma}):=[\mathcal{U}/G].
\]
\end{defn}

Throughout this paper we assume the toric Deligne-Mumford stack $\mathcal{X}(\boldsymbol{\Sigma})$ has semi-projective coarse moduli space, that is, the coarse moduli space $X(\Sigma)$ is a toric variety that has at least one torus-fixed point and the natural map $X(\Sigma)\rightarrow \on{Spec}H^0(X(\Sigma),\mathcal O_{X(\Sigma)})$ is projective. See \cite[Section 3.1]{CCIT} for more details.

\begin{defn} [\cite{BCS}]
Given a stacky fan $\boldsymbol{\Sigma}=(\textbf{N},\Sigma,\rho)$, 
we define the set of box elements $\bbox(\boldsymbol{\Sigma})$ as follows
\[
\bbox(\sigma)=:
\left\{ b\in \textbf{N}: \bar{b}=
\sum\limits_{\rho_k\subseteq  \sigma}c_{k}\bar{\rho}_{k}
\text{ for some }0\leq c_{k}<1 \right\}
\]
And set $\bbox(\boldsymbol{\Sigma}):
=\cup_{\sigma\in \boldsymbol{\Sigma}}\bbox(\sigma)$
\end{defn}

The connected components of the inertia stack $\mathcal{IX}(\boldsymbol{\Sigma})$ 
are indexed by the elements of  $\bbox(\boldsymbol{\Sigma})$ (see \cite{BCS}). 
Moreover, given $b\in \bbox(\boldsymbol{\Sigma})$, the age of the corresponding connected component of 
$\mathcal{IX}$ is defined by  $\age(b):=\sum\limits_{\rho_k\subseteq \sigma} c_{k}$. 

The Picard group $\Pic(\mathcal{X}(\boldsymbol{\Sigma}))$ 
of $\mathcal{X}(\boldsymbol{\Sigma})$
can be identified with the character group 
$\Hom(G,\mathbb{C}^{\times})$. Hence
\begin{equation}
\mathbb{L}^{\vee}=
\Hom(G,\mathbb{C}^{\times})\cong 
\Pic(\mathcal{X}(\boldsymbol{\Sigma})) \cong 
H^{2}(\mathcal{X}(\boldsymbol{\Sigma});\mathbb{Z}).
\end{equation}
The inclusion $(\mathbb{C}^{\times})^{n}\subset \mathcal{U}$ 
induces an open embedding of the stack 
$\mathcal{T}=[(\mathbb{C}^{\times})^{n}/G]$ 
into $\mathcal{X}(\boldsymbol{\Sigma})$ 
and we have $\mathcal{T}\cong \mathbb{T}\times B\textbf{N}_{tor}$
with $\mathbb{T}:=(\mathbb{C}^{\times})^n/\text{Im}(\alpha)\cong 
\mathbf{N}\otimes \CC^\times$
and $\textbf{N}_{tor}\cong \ker(\alpha)$.
The Picard stack $\mathcal{T}$ acts naturally on 
$\mathcal{X}(\boldsymbol{\Sigma})$ and restricts to the $\mathbb{T}$-action on $\mathcal{X}(\boldsymbol{\Sigma})$. 
A $\mathcal{T}$-equivariant line bundle on 
$\mathcal{X}(\boldsymbol{\Sigma})$ 
corresponds to a $(\mathbb{C}^{\times})^{n}$-equivariant 
line bundle on $\mathcal{U}$. Thus, 
\[
\Pic_{\mathcal{T}}(\mathcal{X}(\boldsymbol{\Sigma}))\cong \Hom((\mathbb{C}^{\times})^{n},\mathbb{C}^{\times})\cong (\mathbb{Z}^{n})^{*}.
\]
We write $u_{1},\ldots,u_{n}$ for the basis of 
$\mathcal{T}$-equivariant line bundles on 
$\mathcal{X}(\boldsymbol{\Sigma})$ 
corresponding to the standard basis of $(\mathbb{Z}^{n})^{*}$ 
and write $D_{1},\ldots,D_{n}$ 
for the corresponding non-equivariant line bundles, 
i.e. $$D_{i}=\rho^{\vee}(u_{i}).$$
By abuse of notation, 
we also write $u_{i}$ and $D_{i}$ for the corresponding first Chern classes.

\subsubsection{$S$-extended stacky fan}
Given a stacky fan $\boldsymbol{\Sigma}=(\textbf{N},\Sigma,\rho)$ 
and a finite set 
\begin{equation*}
S=
\{s_{1},\ldots,s_{m}\}\subset \textbf{N}. 
\end{equation*}
The $S$-extended stacky fan in the sense of \cite{Jiang} is given by 
$(\textbf{N},\Sigma, \rho^{S} )$, 
where 
\begin{equation}
\rho^{S}: \mathbb{Z}^{n+m}\rightarrow \textbf{N}, \quad \rho^{S}(e_{i}) := \left\{
     \begin{array}{lr}
      \rho_{i} &  1\leq i \leq n;\\
       s_{i-n} &  n+1\leq i \leq n+m.
     \end{array}
   \right.
\end{equation}
Let $\mathbb{L}^{S}$ be the kernel of 
$\rho^{S}: \mathbb{Z}^{n+m}\rightarrow \textbf{N}$. 
Gale duality of the $S$-extended fan sequence
\begin{equation}\label{S-ext-fan-seq}
0\longrightarrow \mathbb{L}^{S}:=\text{ker}(\rho^S) \longrightarrow \mathbb{Z}^{n+m} \stackrel{\rho^{S}}{\longrightarrow} \textbf{N}
\end{equation}
yields the $S$-extended divisor sequence
\begin{equation}\label{S-extended-divisor-seq}
0\longrightarrow\textbf{N}^{*}\stackrel{\rho^{*}}{\longrightarrow} (\mathbb{Z}^{*})^{n+m}\stackrel {\rho^{S \vee}}{\longrightarrow}(\mathbb{L}^{S })^{\vee},
\end{equation}
where $(\mathbb{L}^{S})^{ \vee}$ is the Gale dual of $\rho^S$.
As in \cite[Section 4]{CCIT}, $(\mathbb{L}^{S})^{\vee}$ 
is the $S$-extended Picard group of $\mathcal{X}(\boldsymbol{\Sigma})$.

Let $\mathcal{A}^{S}$ be the collection of $S$-extended anti-cones, i.e. 
\begin{equation*}
\mathcal{A}^{S}:=\left\{I^{S}\subset \{1,2,\cdots,n+m\}: \sigma_{\overline{I}^S}\in \Sigma\right\}. 
\end{equation*}
Note that 
\[
\{s_{1},\ldots,s_{m}\}\subset I^{S}, \quad \forall I^{S}\in \mathcal{A}^{S}.
\]

By applying $\Hom_{\mathbb{Z}}(-,\mathbb{C}^{\times})$ 
to the $S$-extended dual map $\rho^{\vee}$, we have a homomorphism 
\begin{equation*}
\alpha^{S}:G^{S} \rightarrow 
(\mathbb{C}^{\times})^{n+m}, 
\quad \text{where}\quad G^{S}
:=\Hom_{\mathbb{Z}}((\mathbb{L}^{S})^{\vee}, \mathbb{C}^{\times}).  
\end{equation*}
Define $\mathcal{U}^S$ to be the open subset of $\mathbb{C}^{n+m}$ defined by $\mathcal{A}^{S}$:
\begin{equation*}
\mathcal{U}^{S}:=
\mathbb{C}^{n+m}\setminus \cup_{I^{S}
\not\in \mathcal{A}^{S}}\mathbb{C}^{I^{S}}
=\mathcal{U}\times (\mathbb{C}^{\times})^{m},
\end{equation*}
where 
\begin{equation*}
\mathbb{C}^{I^{S}}=
\left\{(z_{1},\ldots, z_{n+m}):z_{i}=
0\text{ for }i\not \in I^{S}\right\}. 
\end{equation*}
Let $G^{S}$ act on $\mathcal{U}^{S}$ via $\alpha^{S}$. 
Then we obtain the quotient stack $[\mathcal{U}^{S}/G^{S}]$. 
Jiang \cite{Jiang} showed that 
\begin{equation*}
[\mathcal{U}^{S}/G^{S}]\cong [\mathcal{U}/G]=\mathcal{X}(\boldsymbol{\Sigma}).
\end{equation*}

\subsubsection{Toric maps from $\mathbb{P}_{r_{1},r_{2}}$ to $\mathcal{X}(\boldsymbol{\Sigma})$}
We recall the discussion in \cite[Section 3.5]{CCIT} on maps from $1$-dimensional toric stacks to a toric stack.  For positive integers $r_{1}$ and $r_{2}$ let $\mathbb{P}_{r_{1},r_{2}}$ be the unique toric Deligne-Mumford stack such that 
\begin{itemize}
\item
its coarse moduli space is $\mathbb{P}^1$;
\item
its isotropy group  at $0\in \mathbb{P}^{1}$ is $\mu_{r_{1}}$;
\item
its isotropy group  at $\infty \in\mathbb{P}^{1}$ is $\mu_{r_{2}}$;
and 
\item
there are no non-trivial orbifold structures at other points.
\end{itemize}

As in \cite{BCS}, For an extended stacky fan $\boldsymbol{\Sigma}$, 
let $\sigma\in \Sigma$ be a cone, define 
\[
\mbox{link}(\sigma):=\{\tau: \sigma+\tau\in\Sigma,\sigma\cap\tau=0\},
\]
and $\rho_{i_{1}},\ldots,\rho_{i_{l}}$ be the rays in $\mbox{link}(\sigma)$. 
A cone $\sigma\in \Sigma$ defines a closed substack of $\mathcal{X}(\mathbf{\Sigma})$, which is the toric stack $\mathcal{X}(\boldsymbol{\Sigma}/\sigma)$ corresponding to the quotient stacky fan $(\mathbf{N}(\sigma), \Sigma/\sigma, \rho(\sigma))$, where $\Sigma/\sigma$ is the quotient fan in
$\textbf{N}(\sigma)_\QQ=(\textbf{N}/\sum_{i\in\sigma} \mathbb Z \rho_i)\otimes\QQ$.
More precisely, 
$\boldsymbol{\Sigma}/\sigma=
(\textbf{N}(\sigma),\Sigma/\sigma,\rho(\sigma))$ 
is an extended stacky fan, 
where $\rho(\sigma):\mathbb{Z}^{l+m}\rightarrow 
\textbf{N}(\sigma)$ 
is given by the images of $\rho_{i_{1}},\ldots,\rho_{i_{l}}$, 
$s_{1}, \ldots, s_{m}$ under 
$\textbf{N}\rightarrow 
\textbf{N}(\sigma)$. 
From the construction of extended toric Deligne-Mumford stack, we have 
\[
\mathcal{X}(\boldsymbol{\Sigma}/\sigma):=
[\mathcal{U}^{S}(\sigma)/G^{S}(\sigma)]
\]
where 
$\mathcal{U}^{S}(\sigma)=
(\mathbb{C}^{l}-V(J_{\Sigma/\sigma}))\times 
(\mathbb{C}^{\times})^{m}=
\mathcal{U}(\sigma)\times 
(\mathbb{C}^{\times})^{m}$, 
$G^{S}(\sigma)=
Hom_{\mathbb{Z}}(\mathbb{L}^{S\vee}(\sigma),\mathbb{C}^{\times})$. 
 
For a box element $b\in \bbox(\boldsymbol{\Sigma})$, 
let $\mathcal{X}(\boldsymbol{\Sigma})_{b}$ 
be the component of the inertia stack $\mathcal{IX}(\boldsymbol{\Sigma})$ 
corresponding to $b$. 
Then $\mathcal{X}(\boldsymbol{\Sigma})_{b}\cong \mathcal{X}(\boldsymbol{\Sigma}/\sigma(b))$, 
where $\sigma(b)$ is the minimal cone containing 
$\bar{b}$. We define $b_{i}\in[0,1), 1\leq i \leq n$ 
by the condition $\bar{b}=\sum^{n}_{i=1}b_{i}\bar{\rho}_{i}$, 
note that $b_{i}=0$ for $\overline{\rho}_i\not\in \sigma(b)$.

\begin{defn}[see \cite{CCIT}, Notation 8]
Let  $\sigma,\sigma^{\prime}\in \Sigma$ be two top dimensional cones, 
we write $\sigma \dagger \sigma^\prime$ 
if they intersect along a codimension-1 face and 
we denote $j$ to be the unique index such that 
$\bar{\rho}_{j}\in\sigma\setminus \sigma^{\prime}$, 
and $j^{\prime}$ to be the unique index such that 
$\bar{\rho}_{j^{\prime}}\in\sigma^{\prime}\setminus \sigma$. 
\end{defn}

\begin{prop}[\cite{CCIT}, Proposition 10]\label{toric-morphism}
Let $\mathcal{X}(\boldsymbol{\Sigma})$ be 
the toric Deligne-Mumford stack associated to 
a stacky fan $\boldsymbol{\Sigma} =(\textbf{N},\Sigma,\rho)$. 
Suppose top dimensional cones $\sigma, \sigma^{\prime}$ satisfy 
$\sigma \dagger \sigma^\prime$ and $b\in Box(\sigma)$. 
The following are equivalent:
\begin{itemize}
\item A positive rational number $c$ such that 
$\langle c\rangle =\hat{b}_{j}$, where $\hat{b}=\inv(b)$ is the involution of $b$.
\item A representable toric morphism 
$f: \mathbb{P}_{r_{1},r_{2}}\rightarrow 
\mathcal{X}(\boldsymbol{\Sigma})$ 
such that $f(0)=\mathcal{X}(\boldsymbol{\Sigma/\sigma})$, 
$f(\infty)=\mathcal{X}(\boldsymbol{\Sigma/\sigma^{\prime}})$ 
and the restriction 
$f|_{0}:B\mu_{r_{1}}\rightarrow \mathcal{X}(\boldsymbol{\Sigma/\sigma})$ 
gives the box element $\hat{b}\in \bbox(\sigma)$.

\end{itemize}
\end{prop}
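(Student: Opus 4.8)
The plan is to unwind the definition of a representable toric morphism $f\colon \mathbb{P}_{r_1,r_2}\to \mathcal{X}(\boldsymbol{\Sigma})$ in terms of the defining data of both stacks as quotient stacks, exactly as in \cite[Section 3.5]{CCIT}. Recall that $\mathbb{P}_{r_1,r_2}$ is itself a toric Deligne-Mumford stack $[\mathcal{U}'/G']$ for an explicit rank-$1$ stacky fan: its lattice is $\mathbf{N}' = \mathbb{Z}$ with the two rays $\rho'_0 = r_2$ and $\rho'_\infty = -r_1$ (up to the standard normalization), so that a map of stacky fans is a group homomorphism $\mathbf{N}'\to\mathbf{N}$ compatible with the fans, hence determined by the image of the generator of $\mathbf{N}'$. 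First I would set up this dictionary: a representable toric morphism $f$ with $f(0)\in\mathcal{X}(\boldsymbol{\Sigma})_\sigma$ and $f(\infty)\in\mathcal{X}(\boldsymbol{\Sigma})_{\sigma'}$ corresponds to a degree datum, i.e.\ a choice of how the ray $\rho'_0$ maps into the cone $\sigma$ and $\rho'_\infty$ into $\sigma'$. Because $\sigma\dagger\sigma'$, the two cones share a codimension-$1$ face, and the only rays that distinguish them are $\bar\rho_j\in\sigma\setminus\sigma'$ and $\bar\rho_{j'}\in\sigma'\setminus\sigma$.

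Next I would extract the numerical invariant. Pushing the generator of $\mathbf{N}'$ forward and writing it in the basis of rays of $\sigma$, one gets a vector of nonnegative rational coefficients; the coefficient on $\bar\rho_j$ is a rational number $c>0$, and the requirement that $f$ be representable with the prescribed isotropy at $0$, giving the box element $\hat b\in \operatorname{Box}(\sigma)$, translates precisely into the fractional-part condition $\langle c\rangle = \hat b_j$. Here one uses the description recalled just above the statement: for $b\in\operatorname{Box}(\boldsymbol{\Sigma})$ one has $\bar b = \sum_i b_i\bar\rho_i$ with $b_i\in[0,1)$, $b_i = 0$ unless $\bar\rho_i\in\sigma(b)$, and the identification $\mathcal{X}(\boldsymbol{\Sigma})_b\cong\mathcal{X}(\boldsymbol{\Sigma}/\sigma(b))$. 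The involution $\hat b = \operatorname{inv}(b)$ enters because the isotropy at $0$ of the map is read off from the evaluation at the \emph{inverse} box element, matching the orientation conventions for the two torus-fixed points of $\mathbb{P}_{r_1,r_2}$. Conversely, given such a $c$, one builds $f$ explicitly as the induced map on quotient stacks $[\mathcal{U}'/G']\to[\mathcal{U}/G]$ coming from the map of fan sequences, and checks directly that it is representable (i.e.\ injective on isotropy groups), toric, and has the claimed behavior at $0$ and $\infty$.

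Concretely, the two directions are: (i) from a morphism to the number, by restricting $f$ to the torus-fixed point $0$, reading the induced map $B\mu_{r_1}\to\mathcal{X}(\boldsymbol{\Sigma})_\sigma$, and computing the degree against the ray $D_j$ dual to $\bar\rho_j$; and (ii) from the number to the morphism, by specifying $f$ on the level of the $(n+m)$ (or rather $n$) homogeneous coordinates, with the coordinate corresponding to the index $j$ vanishing to the appropriate order at $0$ and the one corresponding to $j'$ vanishing at $\infty$, all other coordinates being nowhere zero, and then verifying $G'$-equivariance with respect to $\alpha^S$ and representability.

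The main obstacle I expect is bookkeeping the representability condition and the role of the involution: one must carefully track how the local isotropy groups $\mu_{r_1}$ and $\mu_{r_2}$ of $\mathbb{P}_{r_1,r_2}$ embed into the isotropy of $\mathcal{X}(\boldsymbol{\Sigma})$ at the images of $0$ and $\infty$, and confirm that the ``sections to marked gerbes'' convention forces the fractional part $\langle c\rangle$ to equal $\hat b_j$ rather than $b_j$. Everything else is a more-or-less mechanical translation of the Cox-style quotient description, so once the conventions for $\operatorname{inv}$ and for orienting $\mathbb{P}_{r_1,r_2}$ are pinned down, the equivalence follows. (This is precisely \cite[Proposition 10]{CCIT}, so I would largely import their argument, adapting notation to the present stacky-fan setup.)
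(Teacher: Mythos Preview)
The paper does not prove this proposition at all: it is stated with the attribution ``\cite{CCIT}, Proposition 10'' and no proof is given in this paper, since the result is simply imported from \cite{CCIT}. Your proposal recognizes this explicitly in its final sentence, and your sketch of how the argument goes (translating the toric morphism into stacky-fan data, extracting the coefficient $c$ along $\bar\rho_j$, and matching the fractional part to $\hat b_j$ via the representability/isotropy condition) is a faithful outline of the CCIT argument, so there is nothing to compare against here beyond noting that both you and the paper defer to \cite{CCIT}.
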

The data $\sigma, \sigma^{\prime}, b$ and 
$c$ determine the map 
$f: \mathbb{P}_{r_{1},r_{2}}\rightarrow 
\mathcal{X}(\boldsymbol{\Sigma})$ and 
determine the rational number $r_{2}$ and 
the box element $b^{\prime}\in \bbox(\sigma^{\prime})$ 
given by the restriction 
$f|_{\infty}:B\mu_{r_{2}}\rightarrow 
\mathcal{X}(\boldsymbol{\Sigma})$. 
More precisely, 
$b^{\prime}$ is the unique element of $\bbox(\sigma^{\prime})$ such that 
\begin{equation}\label{b^prime}
\hat{b}+\lfloor c\rfloor \rho_{j}+q^{\prime}\rho_{j^{\prime}}+b^{\prime}
\equiv 0 \quad \text{mod} \bigoplus
\limits_{i\in \sigma\cap \sigma^{\prime}}\mathbb{Z}\rho_{i}
\end{equation}
for some $q^{\prime}\in \mathbb{Z}_{\geq 0}$. 
As in \cite[Definition 12]{CCIT}, 
define $d_{c,\sigma,j}$ to be the element of 
$\mathbb{L}\otimes\mathbb{Q}$ 
satisfying the relation
\[
c\bar{\rho}_{j}+
\left(
\sum\limits_{i\in \sigma\cap\sigma^{\prime}}c_{i}\bar{\rho}_{i}
\right)
+c^{\prime}\bar{\rho}_{j^{\prime}}=0
\]
such that 
\[
D_{j}\cdot d_{c,\sigma,j}=c, 
\quad D_{j^{\prime}}\cdot d_{c,\sigma,j}=c^{\prime}, 
\quad D_{i}\cdot d_{c,\sigma,j}=c_{i} \text{ for } i\in \sigma\cap \sigma^{\prime},
\]
and
\[
D_{i}\cdot d_{c,\sigma,j}=0 \text{ for }i\not\in \sigma\cup\sigma^{\prime}.
\]
Hence, $d_{c,\sigma,j}$ is the degree of 
the representable toric morphism 
$f:\mathbb{P}_{r_{1},r_{2}}\rightarrow  
\mathcal{X}(\boldsymbol{\Sigma})$. 
Let $\Lambda E^{\sigma^{\prime},b^{\prime}}_{\sigma,b}\subset 
\mathbb{L}\otimes \mathbb{Q}$ 
to be the set of degrees $d_{c,\sigma,j}$ 
representable toric morphisms 
$f:\mathbb{P}_{r_{1},r_{2}}\rightarrow  
\mathcal{X}(\boldsymbol{\Sigma})$ such that 
$f(0)=\mathcal{X}(\boldsymbol{\Sigma/\sigma})$, 
$f(\infty)=\mathcal{X}(\boldsymbol{\Sigma/\sigma^{\prime}})$ 
and $f|_{0}$ and 
$f|_{\infty}$ give the box elements $\hat{b}$ and $b^{\prime}$, respectively. 
More precisely,
\[
\Lambda E^{\sigma^{\prime},b^{\prime}}_{\sigma,b}=
\left\{
d_{c,\sigma,j}\in \mathbb{L}\otimes \mathbb{Q}:
c>0 \text{ such that } \langle c \rangle =\hat{b}_{j} \text{ and } 
b^{\prime} \text{ satisfies } (\ref{b^prime})
\right \},
\]
see \cite[Definition 14]{CCIT}.

We recall a few notions related to extended degrees for toric stacks.
\begin{defn}[\cite{CCIT}, Definition 22]
Consider a cone $\sigma\in \Sigma$, 
let $\Lambda^{S}_{\sigma}\subset\mathbb{L}^{S}\otimes\mathbb{Q}\subset \mathbb{Q}^{n+m}$ 
be the set of elements 
$\lambda=\sum\limits_{i=1}^{n+m}\lambda_{i}e_{i}$ 
such that 
\[
\lambda_{n+j}\in \mathbb{Z}, \quad 1\leq j\leq m; 
\quad \lambda_{i} \in \mathbb{Z}, \text{ if }i\not\in \sigma \text{ and }1\leq i\leq n.
\]
Set $\Lambda^{S}:=\cup_{\sigma\in \Sigma}\Lambda^{S}_{\sigma}$.
\end{defn}

\begin{defn}[\cite{CCIT}, Definition 23]
The reduction function $v^{S}$ is defined by
\begin{align*}
v^{S}:\Lambda^{S}& 
\longrightarrow \bbox(\boldsymbol{\Sigma})\\
\lambda & 
\longmapsto \sum\limits_{i=1}^{n}\lceil \lambda_{i}\rceil \rho_{i}+
\sum\limits_{j=1}^{m}\lceil \lambda_{n+j}\rceil s_{j} 
\end{align*} 
Hence, we have $\overline{v^{S}(\lambda)}=
\sum^{n}_{i=1}\langle -\lambda_{i}\rangle \bar{\rho}_{i}\in\sigma$ 
for $\lambda\in \Lambda^{S}_{\sigma}$.
We introduce the following sets:
\[
\Lambda^{S}_{b}:=\{\lambda\in \Lambda^{S}:v^{S}(\lambda)=b\}
\]
\[
\Lambda E^{S}:= \Lambda^{S}\cap \NE^{S}(\mathcal{X}(\boldsymbol{\Sigma}))
\]
\[
\Lambda E^{S}_{b}:= \Lambda^{S}_{b}\cap \NE^{S}(\mathcal{X}(\boldsymbol{\Sigma}))
\]
\end{defn}

\section{Toric stack bundles}\label{sec:toric_bdle}
\subsection{Construction}
Let $P\rightarrow B$ be a principal $(\mathbb{C}^{\times})^{n+m}$-bundle 
over a smooth projective variety $B$, 
we introduce the toric stack bundle 
$\PX$. 
\begin{defn}[\cite{Jiang}]
The toric stack bundle 
$\pi: \mathcal{P}:=~
\PX\rightarrow B$ is defined 
to be the quotient stack
\[
^{P}\mathcal{X}(\boldsymbol{\Sigma}):=
[(P\times _{(\mathbb{C}^{\times})^{n+m}}\mathcal{U}^{S})/G^{S}]
\]
where $G^{S}$ acts on $P$ trivially.
\end{defn}
It is shown in \cite{Jiang} that $\mathcal{P}$ is a smooth Deligne-Mumford stack.

We now recall the description of the inertia stack of $\mathcal{P}$. We have an action of $(\mathbb{C}^{\times})^{n+m}$ on 
$\mathcal{U}^{S}(\sigma)$ 
induced by the natural action of 
$(\mathbb{C}^{\times})^{l+m}$ on 
$\mathcal{U}^{S}(\sigma)$ and 
the projection 
$(\mathbb{C}^{\times})^{n+m}\rightarrow 
(\mathbb{C}^{\times})^{l+m}$.
We let
\begin{align*}
^{P}\mathcal{X}(\boldsymbol{\Sigma}/\sigma)& =[(P\times_{(\mathbb{C}^{\times})^{n+m}}(\mathbb{C}^{\times})^{l+m}\times _{(\mathbb{C}^{\times})^{l+m}}\mathcal{U}^{S}(\sigma))/G^{S}(\sigma)]\\
& =[(P\times_{(\mathbb{C}^{\times})^{n+m}}\mathcal{U}^{S}(\sigma))/G^{S}(\sigma)]
\end{align*}
be the quotient stack. 
By \cite[Proposition 3.5]{Jiang}, $^{P}\mathcal{X}(\boldsymbol{\Sigma}/\sigma)$ is a closed substack of $\mathcal{P}$.

\begin{prop}[\cite{Jiang}, Proposition 3.6]
Let $\pi: \mathcal{P}\rightarrow B$ be a toric stack bundle over a smooth variety $B$ with fibre the toric Deligne-Mumford stack $\X$ associated to the extended stacky fan $\boldsymbol{\Sigma}$, then the inertia stack of $\mathcal{P}$ is
\[
\mathcal{IP}=
\coprod\limits_{b\in \bbox(\boldsymbol{\Sigma})}\mathcal{P}_{b}:=
\coprod\limits_{b\in \bbox(\boldsymbol{\Sigma})}{^{P}\mathcal{X}(\boldsymbol{\Sigma}/\sigma(\bar{b}))}.
\] 
The age of $\mathcal{P}_{b}$ is the same as the age of $\X_{b}$.
\end{prop}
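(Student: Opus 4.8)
The plan is to read $\mathcal{IP}$ off the quotient presentation and compare it termwise with the description of $\mathcal{IX}(\boldsymbol{\Sigma})$ from \cite{BCS}. Write $W:=P\times_{(\mathbb{C}^\times)^{n+m}}\mathcal{U}^S$, so $\mathcal{P}=[W/G^S]$ and, since $G^S$ is abelian,
\[
\mathcal{IP}=\Big[\{(w,g)\in W\times G^S:\ g\cdot w=w\}\,\big/\,G^S\Big],
\]
where $G^S$ acts on $W$ through $\alpha^S$ and trivially on the $G^S$-factor. First I would identify stabilizers. A point of $W$ is a class $[(p,z)]$, with $(p,z)\sim(p\cdot t^{-1},t\cdot z)$ for $t\in(\mathbb{C}^\times)^{n+m}$, and $g\cdot[(p,z)]=[(p,\alpha^S(g)\cdot z)]$. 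Thus $g$ fixes $[(p,z)]$ iff there is $t\in(\mathbb{C}^\times)^{n+m}$ with $p\cdot t^{-1}=p$ and $t\cdot z=\alpha^S(g)\cdot z$; as $(\mathbb{C}^\times)^{n+m}$ acts freely on the principal bundle $P$, this forces $t=1$, so the condition is just $\alpha^S(g)\cdot z=z$. Hence $\mathrm{Stab}_{G^S}([(p,z)])=\mathrm{Stab}_{G^S}(z)$, and since the $(\mathbb{C}^\times)^{n+m}$- and $G^S$-actions on $\mathcal{U}^S$ commute (both act by coordinatewise scaling), forming the associated bundle commutes with passing to the inertia stack:
\[
\mathcal{IP}\ \cong\ \Big[\big(P\times_{(\mathbb{C}^\times)^{n+m}}I_{G^S}(\mathcal{U}^S)\big)\big/G^S\Big],\qquad I_{G^S}(\mathcal{U}^S):=\{(z,g)\in\mathcal{U}^S\times G^S:\ g\cdot z=z\}.
\]

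Next I would import the toric Deligne--Mumford picture. By \cite{BCS} and \cite{Jiang} (equivalently, the special case $P=B\times(\mathbb{C}^\times)^{n+m}$, $\mathcal{P}=B\times\mathcal{X}(\boldsymbol{\Sigma})$), the $G^S$-quotient of $I_{G^S}(\mathcal{U}^S)$ is $\mathcal{IX}(\boldsymbol{\Sigma})=\coprod_{b\in\bbox(\boldsymbol{\Sigma})}\mathcal{X}(\boldsymbol{\Sigma})_b$: a box element $b$, with minimal cone $\sigma(\bar b)$, determines a finite-order $g_b\in G^S$; its fixed locus $\mathrm{Fix}(g_b)\subset\mathcal{U}^S$ is $(\mathbb{C}^\times)^{n+m}$-invariant; and there is a $(\mathbb{C}^\times)^{n+m}$-equivariant isomorphism of quotient stacks $[\mathrm{Fix}(g_b)/G^S]\cong[\mathcal{U}^S(\sigma(\bar b))/G^S(\sigma(\bar b))]=\mathcal{X}(\boldsymbol{\Sigma}/\sigma(\bar b))$. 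Applying $P\times_{(\mathbb{C}^\times)^{n+m}}(-)$ to this $(\mathbb{C}^\times)^{n+m}$-equivariant decomposition and then taking $G^S$-quotients turns each component into $[(P\times_{(\mathbb{C}^\times)^{n+m}}\mathcal{U}^S(\sigma(\bar b)))/G^S(\sigma(\bar b))]$, which is exactly ${}^{P}\mathcal{X}(\boldsymbol{\Sigma}/\sigma(\bar b))=\mathcal{P}_b$ by the definition recalled above. This yields $\mathcal{IP}=\coprod_{b\in\bbox(\boldsymbol{\Sigma})}\mathcal{P}_b$.

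For the age statement, recall that $\age(\mathcal{P}_b)$ is computed from the eigenvalues of $g_b$ acting on the tangent space of $\mathcal{P}$ along $\mathcal{P}_b$. Since $g_b\in G^S$ acts trivially on $P$ and hence on $B$, it acts trivially on the directions along $\pi^*TB$ and acts on the relative tangent bundle $T_{\mathcal{P}/B}$ exactly as it does on $T\mathcal{X}(\boldsymbol{\Sigma})$ along $\mathcal{X}(\boldsymbol{\Sigma})_b$. Therefore $\age(\mathcal{P}_b)=\age(\mathcal{X}(\boldsymbol{\Sigma})_b)=\sum_{\rho_k\in\sigma(\bar b)}c_k$, as claimed.

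I expect the only genuinely new input, compared with the toric Deligne--Mumford case, to be the identification in the first paragraph: that twisting by the principal bundle $P$ commutes with forming the inertia stack, which rests entirely on freeness of the $(\mathbb{C}^\times)^{n+m}$-action on $P$. The remaining work --- producing the $(\mathbb{C}^\times)^{n+m}$-equivariant presentation $[\mathrm{Fix}(g_b)/G^S]\cong[\mathcal{U}^S(\sigma)/G^S(\sigma)]$, i.e.\ reducing both the space and the structure group --- is the content of \cite{BCS} and, in the $S$-extended setting, \cite{Jiang}, and presents no new difficulty once the equivariance is tracked carefully enough to survive the twist by $P$.
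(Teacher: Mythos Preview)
The paper does not prove this proposition; it is quoted verbatim from \cite{Jiang} (Proposition 3.6 there) and used as a black box, so there is no in-paper argument to compare against. Your argument is a correct and natural proof: the key point---that freeness of the $(\mathbb{C}^\times)^{n+m}$-action on $P$ forces $\mathrm{Stab}_{G^S}([(p,z)])=\mathrm{Stab}_{G^S}(z)$, so that forming the associated bundle commutes with passing to inertia---is exactly the mechanism behind the cited result, and the age computation via the splitting $T\mathcal{P}\cong \pi^*TB\oplus T_{\mathcal{P}/B}$ with $g_b$ acting trivially on the first summand is the right one.
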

For the principal $(\mathbb{C}^{\times})^{n+m}$-bundle 
$P=\oplus^{n+m}_{j=1}L^{*}_{j}$ over $B$, 
where $L_{j}$ is the corresponding $j$-th line bundle. Let 
\begin{equation}
U_{j} = \left\{
     \begin{array}{lr}
      u_{j}-c_{1}(L_{j}) &  1\leq j \leq n;\\
       0 &  n+1\leq j \leq n+m.
     \end{array}
   \right.
\end{equation}
By abuse of notation, we also denote $U_j$ for 
the corresponding $\mathbb T$-equivariant line bundle over $\mathcal P$.

\subsection{Main result}
We choose an integral basis $\{p_1,\ldots,p_{n-r}\}$ of $\mathbb L^{\vee}$. The toric stack bundle $\mathcal{P}$ is endowed with $n-r$ tautological line bundles whose first Chern classes we denote by $-P_{1},\ldots,-P_{n-r}$. They restrict to the corresponding first Chern classes $-p_{1},\ldots,-p_{n-r}$ on the fiber. 
Recall that the $\mathbb T$-equivariant Novikov ring of the toric stack $\mathcal X (\boldsymbol{\Sigma})$ is defined as
\[
\Lambda_{nov}^{\mathbb T}:=S_{\mathbb T}[[\NE(\mathcal{X}(\boldsymbol{\Sigma}))\cap H_{2}(\X;\mathbb{Z})]],
\]
where $S_{\mathbb T}$ is the fraction field of $R_{\mathbb T}:=H^*_{\mathbb T}(pt,\mathbb C)$ and $\NE(\mathcal{X}(\boldsymbol{\Sigma}))$ is the Mori cone of $\mathcal{X}(\boldsymbol{\Sigma})$.

For $\mathcal{D}\in H_{2}(\mathcal{P})$, let $\mathfrak D:=\pi_{*}(\mathcal{D})\in H_{2}(B)$ be its projection to the base and let
\[
\lambda=(d,k)\in\mathbb{L}^{S}\otimes\mathbb{Q},
\]
under the canonical splitting 
$\mathbb{L}^{S}\otimes
\mathbb{Q}\cong 
(\mathbb{L}\otimes
\mathbb{Q})\oplus\mathbb{Q}^m $ be the fiber class, 
such that 
$\langle P_{i},\mathcal{D}\rangle=
\langle p_{i},d\rangle$. 
Hence $\mathcal D$ is represented by $Q^{\mathfrak D} q^d$ in the Novikov ring of $\mathcal P$.

Let $J_{B}(z,\tau)= \sum\limits_{\mathfrak D\in \NE(B)}J_{\mathfrak D}(z,\tau)Q^{\mathfrak D}$  be the decomposition of the $J$ function of $B$ according to the degree of curves.

\begin{defn}\label{defn:I_func}
We introduce the hypergeometric modification 
(The $S$-extended $\mathbb{T}$-equivariant $I$-function of the toric stack bundle $\mathcal{P}$)
\begin{align*}
& I^{S}_{\mathcal{P}}(z,t,\tau,q,x,Q):=\\
& e^{\sum^{n}_{i=1}U_{i}t_{i}/z}
\sum\limits_{\mathfrak D\in \NE(B)}
\sum\limits_{b\in \bbox(\boldsymbol{\Sigma})}
\sum\limits_{\lambda\in \Lambda E^{S}_{b}}
J_{\mathfrak D}(z,\tau)Q^{\mathfrak D}\tilde{q}^{\lambda}
e^{\lambda t}
\left( 
\prod\limits^{n+m}_{i=1}
\frac
{\prod_{\langle a\rangle =\langle \lambda_{i}-c_{1}(L_{i})\cdot\mathfrak D\rangle,a\leq 0}(U_{i}+az)}
{\prod_{\langle a\rangle =\langle \lambda_{i}-c_{1}(L_{i})\cdot\mathfrak D\rangle,a\leq \lambda_{i}-c_{1}(L_{i})\cdot\mathfrak D}(U_{i}+az)}
\right)
\textbf{1}_{b}
\end{align*}
where 
\begin{enumerate}
\item for each $\lambda\in \Lambda E^{S}_{b}$, 
we write $\lambda_{i}$ for the $i$th component of 
$\lambda$ as an element of $\mathbb{Q}^{n+m}$. 
We have $\langle \lambda_{i}\rangle=\hat{b}_{i}$ 
for $1\leq i \leq n$ and 
$\langle \lambda_{i}\rangle =0$ 
for $n+1\leq i\leq n+m$.\\
\item $U_{i}:=0$, if $n+1\leq i\leq n+m$.\\
\item $\textbf{1}_{b}$ is 
the identity class supported on the twisted sector 
$\X_{b}$ 
associated to $b\in \bbox(\boldsymbol{\Sigma})$;\\
\item $t=(t_{1},\ldots,t_{n})$ are variables, 
and $e^{\lambda t}:=
\prod^{n}_{i=1}e^{\langle D_{i},d\rangle t_{i}}$
\item for $\lambda=(d,k) \in \Lambda E^{S}\subset \mathbb{L}^{S}\otimes \mathbb{Q}$, 
we have $k\in (\mathbb{Z}_{\geq 0})^m$ and 
$d\in \NE(\mathcal{X}(\boldsymbol{\Sigma}))\cap H_{2}(\X;\mathbb{Z})$, 
we write 
$\tilde{q}^{\lambda}=
q^{d}x^{k}=
q^{d}x_{1}^{k_{1}}\cdots x_{m}^{k_{m}}
\in \Lambda^{\mathbb{T}}_{nov}[[x]]$, 
with variables $x=(x_{1},\ldots, x_{m})$.
\end{enumerate}
\end{defn}

\begin{defn}[\cite{CCIT}]
A $\Lambda^{\mathbb{T}}_{nov}[[x]]$-valued point of $\mathcal L_{\mathcal P}$ is an element of $\mathcal H[[x]]$ of the form
\[
-1z+\textbf{t}(z)+
\sum\limits^{\infty}_{n=0}
\sum\limits_{\substack{d\in \NE(\X)\\ \mathfrak D\in \NE(B)}}
\sum\limits_{\alpha}
\frac{Q^{\mathfrak D}q^{d}}{n!}
\langle 
\textbf{t}(\bar{\psi}),\ldots,\textbf{t}(\bar{\psi}),
\frac{\phi_{\alpha}}{-z-\bar{\psi}}
\rangle ^{\mathbb{T}}_{0,n+1,\mathcal{D}}\phi^{\alpha}
\]
for some $\textbf{t}(z)\in \mathcal{H}_{+}[[x]]$ 
with $\textbf{t}|_{Q=q=x=0}=0$. 

\end{defn}

The following is the main result of this paper.
\begin{theorem}\label{main-theorem}
The hypergeometric modification
$I^{S}_{\mathcal{P}}(z,t,\tau,q,x,Q)$ is 
a $\Lambda^{\mathbb{T}}_{nov}[[x,t]]$-valued point
of the Lagrangian cone 
$\mathcal{L}_{\mathcal{P}}$ 
for the $\mathbb{T}$-equivariant Gromov-Witten theory 
of $\cP$.
\end{theorem}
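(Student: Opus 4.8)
The plan is to follow the strategy of \cite{Brown} and \cite{CCIT} and verify that $I^S_{\mathcal P}$ satisfies the three conditions \textbf{(C1)}--\textbf{(C3)} of the characterization Theorem \ref{characterization}, which (via localization) singles out points on $\mathcal L_{\mathcal P}$ among power series with the appropriate recursive pole structure. First I would record the structural properties of $I^S_{\mathcal P}$ that make the characterization applicable: it is a $\Lambda^{\mathbb T}_{nov}[[x,t]]$-valued power series in $z^{-1}$ (after the dilaton shift) whose leading terms reproduce $J_{\mathcal P}$ to the required order, and whose coefficients are rational functions of the equivariant parameters with poles only along the hyperplanes $U_i + a z = 0$. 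Condition \textbf{(C1)} is the statement that the non-equivariant limit, and more precisely the restriction away from the fixed loci, is regular; this follows from the fact that each summand is built from the base $J$-function $J_D(z,\tau)$ — which is a genuine point on $\mathcal L_B$ by assumption — multiplied by the hypergeometric fiber factor $\prod_i \prod(U_i+az)/\prod(U_i+az)$, exactly as in \cite{CCIT} but with the fiber Chern classes $u_i$ replaced by $U_i = u_i - \Lambda_i$ and with the Novikov variable refined to $Q^D q^d x^k$.

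Next I would set up the $\mathbb T$-fixed point analysis. The fixed loci of the fiberwise torus action on $\mathcal P$ are indexed by pairs $(\sigma,b)$ with $\sigma$ a top-dimensional cone of $\Sigma$ and $b$ a box element lying in $\sigma$; by the description of $\mathcal{IP}$ recalled above, each such locus is an abelian gerbe $\mathcal G_{\sigma,b}$ over the base $B$ — concretely a gerbe banded by the isotropy group of the corresponding fixed point of the fiber. Condition \textbf{(C2)} requires that the principal parts of $I^S_{\mathcal P}$ at these fixed loci, expanded in $1/(U_i+az)$, have the shape dictated by the string and divisor relations together with the genus-$0$ topological recursion — i.e.\ that the "initial terms" match $\mathbf 1_b \cdot (\text{fixed-point restriction})$ up to the universal $z$-series. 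For this one expands the product in the $I$-function around the fixed point $p_{\sigma,b}$, reads off which factors $(U_i+az)$ vanish there, and checks that the resulting expansion is the stable-map recursion predicted by broken edges $\sigma \dagger \sigma'$; the combinatorics is exactly that of \cite[Section 5--6]{CCIT}, with the base factor $J_D(z,\tau)$ carried along inertly because the torus acts trivially on $B$.

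The genuinely new point — and the step I expect to be the main obstacle — is \textbf{(C3)}: the recursion relates the principal part of $I^S_{\mathcal P}$ at a fixed locus $\mathcal G_{\sigma,b}$ to the full function restricted to the adjacent fixed locus $\mathcal G_{\sigma',b'}$, with the edge factor given by the Euler class of the normal bundle of a $\mathbb P_{r_1,r_2}$-family of rational curves joining them, and this requires knowing the genus-$0$ Gromov--Witten theory (the $J$-function) of the gerbe $\mathcal G_{\sigma,b}$ over $B$, not merely of $B$. Here I would invoke the computation of \cite{AJT09}, which expresses the Gromov--Witten theory of such abelian gerbes in terms of that of $B$ via a decomposition over characters of the banding group: the $J$-function of $\mathcal G_{\sigma,b}$ is a character-weighted sum of twisted $J$-functions $J_D(z,\tau)$ of $B$, and these are precisely the pieces appearing in the definition of $I^S_{\mathcal P}$ indexed by the box elements $b$ and the shifts $\Lambda_i(D)$ in $\langle \lambda_i - \Lambda_i(D)\rangle$. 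Granting that dictionary, the verification of \textbf{(C3)} reduces to a bookkeeping identity: I would compute $d_{c,\sigma,j}$, the degree $\Lambda E^{\sigma',b'}_{\sigma,b}$ of the connecting toric morphism, and the attendant shift of $D \in \NE(B)$ coming from the normal-bundle twists $\Lambda_i$, and match the hypergeometric factor's pole-and-zero pattern at $p_{\sigma',b'}$ against the edge-times-vertex contribution — the same manipulation with $\lfloor c\rfloor$, $\langle c\rangle = \hat b_j$, and the relation \eqref{b^prime} as in \cite{CCIT}, now twisted by $B$-degrees. Assembling \textbf{(C1)}--\textbf{(C3)} and applying Theorem \ref{characterization} then yields that $I^S_{\mathcal P}$ lies on $\mathcal L_{\mathcal P}$, proving Theorem \ref{main-theorem}.
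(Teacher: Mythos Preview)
Your overall strategy is correct---the paper does prove Theorem~\ref{main-theorem} by verifying \textbf{(C1)}--\textbf{(C3)} of Theorem~\ref{characterization}---but you have misidentified what the three conditions actually assert, and this hides a genuine gap.

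Your description of \textbf{(C1)} (``restriction away from the fixed loci is regular'') is not what \textbf{(C1)} says: it is a statement about the pole structure \emph{in $z$} of the restriction $I^S_{(\sigma,b)}$ \emph{to} each fixed locus---essential singularity at $z=0$, pole at $z=\infty$, and simple poles only at $z=U_j(\sigma)/c$ with $\langle c\rangle=\hat b_j$. More seriously, what you describe under \textbf{(C3)} (``the recursion relates the principal part \dots\ to the full function restricted to the adjacent fixed locus $\mathcal G_{\sigma',b'}$, with the edge factor \dots'') is in fact the content of \textbf{(C2)}, which you have already (roughly) sketched. The residue computation matching $Rec(c)^{(\sigma',b')}_{(\sigma,b)}$ is pure hypergeometric bookkeeping in the fiber and does \emph{not} use \cite{AJT09} at all; the base factor $J_D(z,\tau)$ is indeed carried along inertly there.

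The actual \textbf{(C3)} is that the Laurent expansion of $I^S_\sigma$ at $z=0$ lies on the \emph{twisted} Lagrangian cone $\mathcal L^{tw}_\sigma$ of the gerbe $\mathcal P_\sigma$, where the twist is by $(N_\sigma\mathcal P,\,e_{\mathbb T}^{-1})$. You correctly locate \cite{AJT09} as the input here, but that result only tells you that (a rescaled version of) $\sum_b\sum_D J_D(z,\tau)\,\mathbf 1_b$ lies on the \emph{untwisted} cone $\mathcal L_\sigma$. The step you are missing is how to pass from the untwisted cone to the twisted one and simultaneously manufacture the fiber hypergeometric factors $\prod_{i\in\sigma}\prod(U_i(\sigma)+az)^{\pm 1}$. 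This is done by (i) hitting the untwisted point with a differential operator $\exp\bigl(-\sum_{j\in\sigma}G_0^{(j)}(z\theta_j,z)\bigr)$ built from Bernoulli-type generating functions, which stays on $\mathcal L_\sigma$, and then (ii) applying Tseng's orbifold quantum Riemann--Roch operator $\Delta_{tw}=\bigoplus_b\exp\bigl(\sum_{j\in\sigma}G^{(j)}_{b_j}(0,z)\bigr)$, which carries $\mathcal L_\sigma$ to $\mathcal L^{tw}_\sigma$. The identities $G^{(j)}_y(x,z)=G^{(j)}_0(x+yz,z)$ and $G^{(j)}_0(x+z,z)=G^{(j)}_0(x,z)+s^{(j)}(x)$, together with $\exp(s^{(j)}(x))=(U_j(\sigma)+x)^{-1}$, then collapse the exponent to exactly the product over $i\in\sigma$ appearing in $I^S_\sigma$. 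Without this quantum Riemann--Roch step your argument does not reach $\mathcal L^{tw}_\sigma$, and \textbf{(C3)} is not verified.
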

The rest of this paper is devoted to a proof of Theorem \ref{main-theorem}.

\section{Localization methods in toric Gromov-Witten theory}\label{sec:characterization}
In this Section we describe a characterization of the Lagrangian cone of a toric stack bundle $\mathcal{P}$ via localization. 

\subsection{Lagrangian cones for toric stack bundles}
Given a toric Deligne-Mumford stack $\mathcal{X}(\boldsymbol{\Sigma})$ associated to an extended stacky fan $\boldsymbol{\Sigma}$. 
The maximal torus $\mathbb{T}$ acts on the toric Deligne-Mumford stack $\mathcal{X}(\boldsymbol{\Sigma})$,
hence acts on the toric stack bundle $\mathcal{P}= ~^{P}\mathcal{X}(\boldsymbol{\Sigma})$.
The fixed points under the torus action correspond to the top dimensional cones in the fan $\Sigma$.
A top dimensional cone $\sigma$ gives a fixed point section\footnote{We abuse notation here: $\mathcal{P}_{\sigma}$ are gerbes over $B$ which may not have sections.} 
 $\mathcal{P}_{\sigma}:=~^{P}\mathcal{X}(\boldsymbol{\Sigma}/\sigma)$ for the toric stack bundle $\mathcal P$.
Note that $\mathcal P_\sigma$ is an abelian gerbe over the base $B$: it is a fiber product of root gerbes associated to the line bundles defining $\mathcal{P}$. We write $N_{\sigma}\mathcal{P}$ for the normal bundle at the $\mathbb{T}$-fixed section $\mathcal{P}_{\sigma}$. 

For the rest of this paper, we write $\mathcal{H}$ for 
Givental's symplectic vector space 
associated to the toric stack bundle 
$\mathcal{P}$. 
Let $\sigma$ be a top-dimensional cone, 
we denote Givental's symplectic vector space 
associated to the $\mathbb{T}$-fixed section
$\mathcal{P}_{\sigma}$ by $\mathcal{H}_{\sigma}$.
Let $\mathcal{H}^{tw}_{\sigma}$ and 
$\mathcal{L}^{tw}_{\sigma}$ be 
the symplectic vector space and 
Lagrangian cone associated to 
the twisted Gromov-Witten theory of 
$\mathcal{P}_{\sigma}$, 
where the twist is given by the vector bundle 
$N_{\sigma}\mathcal{P}$ and 
the $\mathbb{T}$-equivariant inverse Euler class $e^{-1}_{\mathbb{T}}$. See \cite{Tseng} for more details on twisted theory.

Let
\[
\Sigma_{top}:=\{\sigma\in \Sigma: \sigma 
\text{ is a top-dimensional cone in }\Sigma \}
\subset \Sigma
\]
be the set of top-dimensional cones in $\Sigma$. By the Atiyah-Bott localization theorem, we have an isomorphism of Chen-Ruan orbifold cohomology rings
\begin{equation}
H^{*}_{\CR,\mathbb{T}}(\mathcal{P})\otimes_{R_{\mathbb{T}}}S_{\mathbb{T}}\cong 
\bigoplus\limits_{\sigma \in \Sigma_{top}}H^{*}_{\CR}(\mathcal{P}_{\sigma})\otimes_{\mathbb{C}}S_{\mathbb{T}}.
\end{equation}
In particular, the identity class 
$1\in H^{*}_{\CR,\mathbb{T}}(\mathcal{P})$ 
corresponds to 
$\bigoplus\limits_{\sigma\in \Sigma_{top}}1_{\sigma} $,
 where $1_{\sigma}$ is the identity element in 
$H^{*}_{\CR,\mathbb{T}}(\mathcal{P}_{\sigma})$.
Furthermore, we have an isomorphism of vector spaces:
\begin{equation}\label{localization-isom}
\mathcal{H}\cong 
\bigoplus_{\sigma\in\Sigma_{top}}
\mathcal{H}_{\sigma}.
\end{equation} 
For each $f\in\mathcal{H}$ and 
$\sigma\in \Sigma_{top}$, 
let $f_{\sigma}:=f|_{\mathcal{H}_\sigma}\in \mathcal{H}_{\sigma}$ be 
the restriction of $f$ to the component 
$\mathcal{H}_{\sigma}$ of $\mathcal{H}$.
Hence $f_{\sigma}$ can also be viewed as the restriction 
of $f$ to the inertia stack $\mathcal{IP}_{\sigma}$. 
Let $f_{(\sigma,b)}:=f_\sigma|_{(\mathcal{P}_{\sigma})_{b}}$ 
be the restriction of $f_{\sigma}$ 
to the twisted sector $(\mathcal{P}_{\sigma})_{b}$ 
of $\mathcal{IP}_{\sigma}$ corresponding to the box element $b\in \bbox(\sigma)$. 

\subsection{Toric virtual localization}
We spell out explicitly the virtual localization applied to $\mathcal{P}$. Our presentation closely follows the toric case in \cite{Liu}.

The $\mathbb T$-action on $\mathcal P$ induces 
a $\mathbb T$-action on the moduli space 
$\overline{M}_{0,n+1}(\mathcal{P},\mathcal{D})$. 
The $\mathbb{T}$-fixed strata in the moduli space 
$\overline{M}_{0,n+1}(\mathcal{P},\mathcal{D})$ 
are indexed by decorated trees $\Gamma$, 
where $\Gamma$ consists of the following data. 
\begin{enumerate}

\item each vertex $v\in \Gamma$ is assigned with a top-dimensional cone 
$\sigma\in \Sigma_{top}$,  
and we denote the vertex by $v(\sigma)$.

\item each edge $e\in \Gamma$ is assigned with a codimension-$1$ cone $\tau_{e}\in\Sigma$.

\item We denote $V(\Gamma)$ to be the set of vertices of $\Gamma$,
$E(\Gamma)$ to be the set of edges of $\Gamma$. Let 
\[
F(\Gamma)=\{(e,v)\in E(\Gamma)\times V(\Gamma)| e \text{ is incident to }v\}
\]
be the set of flags in $\Gamma$.

\item Each edge $e$ is associated with a positive integer $d_{e}$ 
by the degree map $d:E(\Gamma)\rightarrow \mathbb Z_{>0}$.

\item Each flag $(e,v)$ of 
$\Gamma$ is labelled with an element $k_{(e,v)}\in G_v$,
where $G_v$ is the isotropy group
of the $\mathbb{T}$-fixed section 
$\mathcal{P}_{\sigma}$.

\item There is a marking map $s: \{1,2,\ldots,n+1\}\rightarrow V(\Gamma)$ 
that associates each marking with vertices of $\Gamma$.

\item An element $k_{j}\in G_{s(j)}$ is associated with the marking $j\in \{1,2,\ldots,n+1\}$.

\item Some compatibility conditions as in \cite[Definition 9.6]{Liu}.
\end{enumerate}

Note that the degree $\mathcal D$ is encoded in the tree $\Gamma$ by $\mathfrak D=\pi_* \mathcal D$ for each vertex $v$ and $d_e$ for each edge $e$.

We write $DT_{0,n+1}(\mathcal{P},\mathcal{D})$ for all decorated trees that contain the above data.

For a vertex $v$ in a decorated graph 
$\Gamma \in DT_{0,n+1}(\mathcal{P},\mathcal{D})$, 
we define:
\begin{itemize}
\item $S(v):=\{j\in \{1,2,\ldots,n+1\}:s(j)=v\}$, the set of markings associated to the vertex $v$.
                
\item $E(v):=\{e\in E(\Gamma):(e,v)\in F(\Gamma)\}$, the set of edges incident to the vertex $v$.
               
\item $\val(v):=|E(v)|+|S(v)|$, the valence of the vertex $v$.
\end{itemize}
We write $\mathcal{M}_{\Gamma}$ for the fixed locus of 
$\overline{M}_{0,n+1}(\mathcal{P},\mathcal{D})$ 
given by $\Gamma$,
the contribution of the Gromov-Witten invariant 
$\langle 
\gamma_{1}\bar{\psi}_{1}^{a_{1}},\ldots,\gamma_{n+1} \bar{\psi}_{n+1}^{a_{n+1}}
\rangle_{0,n+1,\mathcal{D}} $ 
from $\mathcal{M}_{\Gamma}$ is: 
\begin{align}\label{local-contr}
& c_{\Gamma}
\prod\limits_{e\in E(\Gamma)}h(e)
\prod\limits_{(e,v)\in F(\Gamma)}h(e,v)
\prod\limits_{v\in V(\Gamma)}
\left(
\prod\limits_{j:s(j)=v}\iota^{*}_{\sigma}\gamma_{j}
\right)\\
\notag & \times \prod\limits_{v\in V(\Gamma)}
\int_{
[
\overline{\mathcal{M}}^{\vec{b}(v)}_{0,val(v)}
(\mathcal{P}_{\sigma},\mathfrak D)
]^{w}
}
\frac{h(v)\prod_{j\in S(v)}\bar{\psi}^{a_{j}}_{j}}
{\prod_{e\in E(v)}(e_{\mathbb{T}}(T_{\eta(e,v)}\mathcal{C}_{e})-\bar{\psi}_{(e,v)}/r_{(e,v)})}
\end{align}
where:
\begin{itemize}
\item $c_{\Gamma}=
\frac{1}{|Aut(\Gamma)|}
\prod\limits_{e\in E(\Gamma)}
\frac{1}{d_{e}|G_{e}|}
\prod\limits_{(e,v)\in F(\Gamma)}
\frac{|G_{v}|}{r_{(e,v)}}$.

\item $G_{e}$ is the generic stabilizer of 
the toric substack bundle $\mathcal{P}_{\tau_{e}}$.

\item $r_{(e,v)}:=|\langle k_{(e,v)}\rangle|$ is the order of $k_{(e,v)}\in G_{v}$.

\item $h(e)=
\frac{e_{\mathbb{T}}(H^{1}(\mathcal{C}_{e},f^{*}_{e}T\mathcal{P})^{mov})}
{e_{\mathbb{T}}(H^{0}(\mathcal{C}_{e},f^{*}_{e}T\mathcal{P})^{mov})}$

\item $h(e,v)=
e_{\mathbb{T}}
((T_{\sigma}\mathcal{P})^{k_{(e,v)}})$

\item $h(v)=e^{-1}_{\mathbb{T}}
((N_{\sigma}\mathcal{P})_{0,val(v),\mathfrak D})$

\item $f_{e}:\mathcal{C}_{e}\rightarrow \mathcal P$ 
is a map to the toric substack bundle $\mathcal{P}_{\tau_{e}}=~^{P}\chi(\mathbf{\Sigma}/\tau_e)$. 

\item $H^{i}(\mathcal{C}_{e},f^{*}_{e}T\mathcal{P})^{mov}$ 
denotes the moving part of 
$H^{i}(\mathcal{C}_{e},f^{*}_{e}T\mathcal{P})$
with respect to the $\mathbb{T}$-action.
\item $\iota_{\sigma}:\mathcal{P}_{\sigma}\hookrightarrow \mathcal{P}$ 
is the inclusion of the fixed section $\mathcal{P}_{\sigma}$.

\item $\eta(e,v)=\mathcal{C}_e\cap \mathcal{C}_v$ is a node of $\mathcal C$ on $\mathcal{C}_{e}$, 
where $(e,v)\in F(\Sigma)$.

\item $\vec{b}(v)\in ({G_v}) ^{val(v)}$ is given by the decorations 
$k_{j},j\in S(v)$, and 
$k_{(e,v)},e\in E(v)$.
\item $(N_{\sigma}\mathcal{P})_{0,val(v),0}$ 
is the twisting bundle associated to the vector bundle 
$N_{\sigma}\mathcal{P}$ 
over the $\mathbb{T}$-fixed section $\mathcal{P}_{\sigma}$, as in \cite[Definition 2.5.10]{Tseng}.

\item $\overline{\mathcal{M}}^{\vec{b}(v)}_{0,val(v)}
(\mathcal{P}_{\sigma},\mathfrak D)$ 
is taken to be a point  
if $val(v)\leq 2$ and 
$\mathfrak D=0$. 
The twisting bundles $(N_{\sigma}\mathcal{P})_{0,val(v),0}$ 
in these unstable cases are defined to be 
$(T_{\sigma}\mathcal{P})^{k_{e,v}}$, 
as in the end of \cite[Section 9.3.4]{Liu}.
\end{itemize}

\subsection{Characterization theorem}
 For $\sigma\in \Sigma_{top}$, let $U_{k}(\sigma)$ be the character of $\mathbb{T}$ given by the restriction of the line bundle $U_{k}$ to the $\mathbb{T}$-fixed locus $\mathcal{P}_{\sigma}$. 

We will prove the following characterization result: 

\begin{thm}\label{characterization}
Let $\mathcal{P}=~^{P}\mathcal{X}(\boldsymbol{\Sigma})$ 
be a smooth toric stack bundle 
associated to an extended stacky fan 
$\boldsymbol{\Sigma} =(\textbf{N},\Sigma,\rho)$
and a $(\mathbb{C}^{\times})^{n+m}$ bundle $P\rightarrow B$. 
Let $x=(x_{1},\ldots,x_{m})$ be formal variables. 
Suppose $f$ is an element of $\mathcal{H}[[x]]$ 
satisfies $f|_{Q=q=x=0}=-1z$, 
then $f$ is a $\Lambda^{\mathbb{T}}_{nov}[[x]]$-value point 
of the Lagrangian cone $\mathcal{L}_{\mathcal{P}}$ 
if and only if it meets the following three conditions:
\begin{description}
\item[(C1)] For each
$\sigma \in \Sigma_{top}$ 
and $b\in \bbox(\sigma)$, 
the restriction $f_{(\sigma,b)}$
is a power series in $Q, q$ and $x$ 
with coefficients being elements of 
$S_{\mathbb{T}}(z)$. 
As a function in $z$, 
$f_{(\sigma,b)}$ has essential singularity at $z=0$, 
a finite order pole at $z=\infty$, simple poles at 
$z=\frac{U_{j}(\sigma)}{c}$, whenever
there exists $\sigma^{\prime}\in \Sigma$ and  $c>0 $
such that $\sigma \dagger \sigma^\prime$, 
 $j\in \sigma\setminus \sigma^{\prime}$ and  
 $\langle c\rangle= \hat{b}_{j}$. 
And $f_{(\sigma,b)}$ is regular elsewhere.

\item[(C2)] The residues of $f_{(\sigma,b)}$ 
at the simple pole $z=\frac{U_{j}(\sigma)}{c}$ satisfy the following recursion relations: 
\[
Res_{z=\frac{U_{j}(\sigma)}{c}}
f_{(\sigma,b)}(z)dz=
-q^{d_{c,\sigma,j}}
Rec(c)^{(\sigma^{\prime},b^{\prime})}_{(\sigma,b)}
f_{(\sigma^{\prime},b^{\prime})}(z)|_{z=\frac{U_{j}(\sigma)}{c}},
\]
where the {\em recursion coefficient} $Rec(c)^{(\sigma^{\prime},b^{\prime})}_{(\sigma,b)}$ associated to $(\sigma,\sigma^{\prime},b,c)$ 
is an element of $S_{\mathbb{T}}$ 
given by: 
\begin{equation*}
 Rec(c)^{(\sigma^{\prime},b^{\prime})}_{(\sigma,b)}:=\frac{1}{c}
\left(
\prod\limits_{i\in \sigma:b_{i}=0}U_{i}(\sigma)
\right)
\frac{\left(\frac{c}{U_{j}(\sigma)} \right)^{\lfloor c\rfloor}}
{\lfloor c\rfloor !}
\frac{\left(\frac{c}{U_{j}(\sigma)} \right)^{\lfloor c^{\prime}\rfloor}}
{\lfloor c^{\prime}\rfloor !}
\prod\limits_{i\in \sigma\cap\sigma^{\prime}}
\frac{\prod_{\langle a\rangle =\hat{b}_{i},a<0}(U_{i}(\sigma)+U_{j}(\sigma)\frac{a}{-c})}
{\prod_{\langle a\rangle =\hat{b}_{i},a<c_{i}}(U_{i}(\sigma)+U_{j}(\sigma)\frac{a}{-c})},
\end{equation*}
\item [(C3)] The Laurent expansion of the restriction
$f_{\sigma}$ at $z=0$ 
is a $\Lambda^{\mathbb{T}}_{nov}[[x]]$-valued point of 
the twisted Lagrangian cone $\mathcal{L}^{tw}_{\sigma}$.

\end{description}
\end{thm}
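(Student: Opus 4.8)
The plan is to follow the standard Givental-style strategy for localization-based mirror theorems as in \cite{Brown} and \cite{CCIT}, reducing the proof of Theorem \ref{characterization} to a bookkeeping exercise once the geometric input is identified. First I would establish the ``only if'' direction: given a $\Lambda^{\mathbb{T}}_{nov}[[x]]$-valued point $f$ of $\mathcal{L}_{\mathcal{P}}$, I would apply Atiyah-Bott virtual localization to the descendant Gromov-Witten invariants of $\mathcal{P}$ defining $f$, using the explicit local contribution formula \eqref{local-contr}. The fixed loci are indexed by decorated trees $\Gamma \in DT_{0,n+1}(\mathcal{P},\mathcal{D})$, and each $\Gamma$ has a distinguished vertex $v$ carrying the last marked point (where the descendant $\tfrac{\phi_\alpha}{z-\bar\psi}$ lives). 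Splitting the sum over $\Gamma$ according to whether the edge adjacent to this vertex has degree $0$ or positive degree yields, after summing the geometric series in $\bar\psi$ against $\tfrac{1}{z-\bar\psi}$, precisely the pole structure in \textbf{(C1)}: the node-smoothing contributions produce simple poles at $z = U_j(\sigma)/c$, and the residue at each such pole factors, by the edge-contribution formula $h(e)$ computed via the normal bundle of $\mathcal{P}_{\tau_e}$, into the recursion coefficient $Rec(c)^{(\sigma',b')}_{(\sigma,b)}$ times $f_{(\sigma',b')}$ evaluated at that point — this is \textbf{(C2)}. Condition \textbf{(C3)} is then the statement that the remaining vertex contributions at the distinguished vertex assemble, via the definition of the twisting bundle $(N_\sigma\mathcal{P})_{0,\mathrm{val}(v),\iota_\sigma^*\mathcal{D}}$, into a point of the twisted Lagrangian cone $\mathcal{L}^{tw}_\sigma$ of the gerbe $\mathcal{P}_\sigma$; this follows because the ``outgoing'' half of every tree, viewed from $\mathcal{P}_\sigma$, is exactly a twisted stable map to $\mathcal{P}_\sigma$ with the normal bundle $N_\sigma\mathcal{P}$ providing the Euler-class twist.

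For the ``if'' direction, I would argue that conditions \textbf{(C1)}--\textbf{(C3)} rigidify $f$ completely. The key point is that \textbf{(C3)} determines the $z\to 0$ asymptotics of each $f_\sigma$ in terms of a point of $\mathcal{L}^{tw}_\sigma$, which by the known structure of twisted theory (the twisted $J$-function and the string/divisor equations) is itself determined by finitely many free parameters; \textbf{(C1)} and \textbf{(C2)} then propagate this data across the tree of fixed loci via the recursion, so that the space of $f$ satisfying \textbf{(C1)}--\textbf{(C3)} with $f|_{Q=q=x=0} = -1z$ has exactly the same ``size'' as $\mathcal{L}_{\mathcal{P}}\cap\{f|_{Q=q=x=0}=-1z\}$. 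Since the ``only if'' direction shows $\mathcal{L}_{\mathcal{P}}$ is contained in the solution set of \textbf{(C1)}--\textbf{(C3)}, and both have the same parametrization, they coincide. Concretely I would set up an induction on the degree $\mathcal{D}$ (ordered by a height function on the Mori cone) and on the number of marked points, showing that \textbf{(C2)} expresses the principal parts of $f_{(\sigma,b)}$ in terms of lower-degree data and \textbf{(C3)} pins down the holomorphic-at-$z=0$ part, leaving no freedom beyond what $\mathcal{L}_{\mathcal{P}}$ already has.

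The main obstacle, and the genuinely new content relative to \cite{CCIT} and \cite{Brown}, is the analysis underlying \textbf{(C3)}: the fixed sections $\mathcal{P}_\sigma$ are no longer points (as in the toric-stack case) nor smooth varieties (as in the toric-bundle case) but abelian gerbes over $B$ — fiber products of root gerbes of the line bundles $L_j$. To make the reduction work I need a usable description of the twisted Gromov-Witten theory of such a gerbe with twist $(N_\sigma\mathcal{P}, e_{\mathbb{T}}^{-1})$. This is where the gerbe duality / decomposition results of \cite{AJT09} enter: they identify the Gromov-Witten theory of an abelian gerbe over $B$ with (a sum of twisted copies of) the Gromov-Witten theory of $B$ itself, so that $\mathcal{L}^{tw}_\sigma$ can be expressed in terms of $J_B(z,\tau)$, which is exactly the object appearing in the definition of $I^S_{\mathcal{P}}$ in Definition \ref{defn:I_func}. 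Verifying that the $z\to 0$ expansion of $(I^S_{\mathcal{P}})_\sigma$ matches a point of this $\mathcal{L}^{tw}_\sigma$ is then a matching of hypergeometric factors against the twisted $J$-function of the gerbe; the combinatorics of the age shifts and the box elements $b\in\bbox(\sigma)$, together with the line-bundle corrections $\Lambda_i(D)$ in the product over $i$, is the delicate part. I expect \textbf{(C1)} and \textbf{(C2)} to be essentially formal consequences of the localization formula and the one-dimensional orbit analysis of Proposition \ref{toric-morphism}, with all the real work concentrated in \textbf{(C3)}.
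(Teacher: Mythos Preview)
Your first two paragraphs correctly outline the paper's approach: virtual localization on the decorated-tree fixed loci, the type-\textbf{(A)}/type-\textbf{(B)} split at the distinguished vertex yielding \textbf{(C1)}--\textbf{(C2)} from the edge contributions and \textbf{(C3)} from the vertex contributions, and the induction-on-degree uniqueness argument for the converse. This is essentially the same proof as in the paper.

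However, your third paragraph conflates two logically separate steps. The \cite{AJT09} gerbe decomposition is \emph{not} used in the proof of Theorem~\ref{characterization} itself. In the ``only if'' direction, establishing \textbf{(C3)} requires only that the type-\textbf{(B)} contributions, after absorbing the outgoing type-\textbf{(A)} subgraphs into the insertions $t_\sigma(\bar\psi)$, assemble into a twisted descendant correlator on $\mathcal{P}_\sigma$; this is a formal consequence of the localization formula \eqref{local-contr} and the definition of the twisting class $e_{\mathbb T}^{-1}((N_\sigma\mathcal P)_{0,l,\iota_\sigma^*\mathcal D})$, and holds for \emph{any} point of $\mathcal L_{\mathcal P}$ without knowing anything about the structure of $\mathcal L^{tw}_\sigma$. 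Likewise, the ``if'' direction uses \textbf{(C3)} only as a black box to pin down the $O(z^{-1})$ tail in the induction. The \cite{AJT09} input enters later, in Section~\ref{sec:proof_main_thm}, when one must verify that the \emph{specific} hypergeometric series $I^S_\sigma$ lies on $\mathcal L^{tw}_\sigma$ --- that is where an explicit description of the twisted cone of the gerbe is needed, and that is the proof of Theorem~\ref{main-theorem}, not of Theorem~\ref{characterization}. So your identification of the ``main obstacle'' is correct for the overall program but misplaced for the statement at hand.
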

\begin{proof}
We will follow the approach in \cite{CCIT}. Let $\{\phi_{\alpha}\}$ be a basis for 
$H^{*}_{\CR,\mathbb{T}}(\mathcal{P})
\otimes_{R_{\mathbb{T}}}S_{\mathbb{T}}$ 
and $\{\phi^{\alpha}\}$ 
be its dual basis
with respect to the orbifold Poincar\'e pairing. 
Suppose $f$ is a $\Lambda^{\mathbb{T}}_{nov}[[x]]$-valued point 
on the Lagrangian cone $\mathcal{L}_{\mathcal{P}}$.
Then $f$ can be written as
\begin{equation}\label{point-in-LP}
f=
-1z+\textbf{t}(z)+
\sum\limits^{\infty}_{n=0}
\sum\limits_{\substack{d\in \NE(\X)\\ \mathfrak D\in \NE(B)}}
\sum\limits_{\alpha}
\frac{Q^{\mathfrak D}q^{d}}{n!}
\langle 
\textbf{t}(\bar{\psi}),\ldots,\textbf{t}(\bar{\psi}),
\frac{\phi_{\alpha}}{-z-\bar{\psi}}
\rangle ^{\mathbb{T}}_{0,n+1,\mathcal{D}}\phi^{\alpha}
\end{equation}
for some $\textbf{t}(z)\in \mathcal{H}_{+}[[x]]$ 
with $\textbf{t}|_{Q=q=x=0}=0$. 
Under the isomorphism (\ref{localization-isom}), 
we have that $f$ is determined by its restrictions $f_\sigma$ to $\mathcal H _\sigma$:
\[
f_{\sigma}=
-1_{\sigma}z+\textbf{t}_{\sigma}(z)+
\iota^{*}_{\sigma}
\left(
\sum\limits^{\infty}_{n=0}
\sum\limits_{\substack{d\in \NE(\X)\\ \mathfrak D\in \NE(B)}}
\sum\limits_{\alpha}
\frac{Q^{\mathfrak D}q^{d}}{n!}
\langle 
\textbf{t}(\bar{\psi}),\ldots,\textbf{t}(\bar{\psi}),
\frac{\phi_{\alpha}}{-z-\bar{\psi}}
\rangle ^{\mathbb{T}}_{0,n+1,\mathcal{D}}\phi^{\alpha}
\right)
\]
where 
$\iota_{\sigma}:\mathcal{P}_{\sigma}\rightarrow \mathcal{P}$ 
is the inclusion of the $\mathbb{T}$-fixed section. Furthermore,
let $\phi^{\alpha}_{\sigma,b}$ be 
the restriction of $\phi^{\alpha}$ to $\mathcal{IP}_{\sigma,b}$, we obtain the following sum over graphs via virtual localization in $\mathbb{T}$-equivariant cohomology:
\begin{align}\label{local-sum}
f_{(\sigma,b)}=&
-\delta_{b,0}z+\textbf{t}_{(\sigma,b)}(z)+
\sum\limits^{\infty}_{n=0}
\sum\limits_{\substack{d\in \NE(\X)\\ \mathfrak D\in \NE(B)}}
\sum\limits_{\alpha}\frac{Q^{\mathfrak D}q^{d}}{n!}
\langle \frac{\phi_{\alpha}}{-z-\bar{\psi}},\textbf{t}(\bar{\psi}),\ldots,\textbf{t}(\bar{\psi})\rangle ^{\mathbb{T}}_{0,n+1,\mathcal{D}}\phi^{\alpha}_{\sigma,b}\\
\notag =& -\delta_{b,0}z+\textbf{t}_{(\sigma,b)}(z)+
\sum\limits^{\infty}_{n=0}
\sum\limits_{\substack{d\in \NE(\X)\\ \mathfrak D\in \NE(B)}}\frac{Q^{\mathfrak D}q^{d}}{n!}
\sum\limits_{\Gamma\in DT_{0,n+1}(\mathcal{P},\mathcal{D})}\mbox{C}(\Gamma)_{\sigma,b}
\end{align}
where $\mbox{C}(\Gamma)_{\sigma,b}$ is 
the contribution 
from the $\mathbb{T}$-fixed stratum 
$\mathcal{M}_{\Gamma}\subset \overline{M}_{0,n+1}(\mathcal{P},\mathcal{D})$ 
corresponding to the decorated tree $\Gamma$. 
\[
\sum\limits_{\alpha}
\langle 
\frac{\phi_{\alpha}}{-z-\bar{\psi}},
\textbf{t}(\bar{\psi}),\ldots,\textbf{t}(\bar{\psi})
\rangle ^{\mathbb{T}}_{0,n+1,\mathcal{D}}\phi^{\alpha}_{\sigma,b}
=\sum\limits_{\Gamma\in DT_{0,n+1}(\mathcal{P},\mathcal{D})}\mbox{C}(\Gamma)_{\sigma,b}.
\]
In each decorated tree $\Gamma$, there is a distinguished vertex $v$ 
that carries the first marked point. 
We may assume that $v(\sigma)=v$ and 
the element $k_{1}$ associated with the first marking is $\hat{b}$, 
otherwise the contribution of $\Gamma$ is zero. 
There are two possibilities:
\begin{description}
\item [(A)]The irreducible component carrying the first marked point is 
a ramified cover of a $1$-dimensional orbit  
which lies in a fiber $\mathcal{X}$ 
of the toric stack bundle $\mathcal{P}\rightarrow B$. 
In this case $val(v)=2$;
\item [(B)]The irreducible component 
carrying the first marked point 
maps to a fixed section $\mathcal{P}_{\sigma}$.
\end{description}

Consider a graph $\Gamma$ of type {\bf (A)}. 
Let $e\in E(\Gamma)$ be the only edge incident to $v$. 
We denote the subgraph $\Gamma\setminus \{v,e\}$ by $\Gamma^\prime$, then $\Gamma^\prime$ is connected with $v$ through the edge $e$.
Let $v^{\prime}\in V(\Gamma^\prime)$ be the other vertex incident to $e$ and $v(\sigma^\prime)=v^\prime$
We assume the first marking of the graph $\Gamma^{\prime}$ is associated with the vertex $v^{\prime}$. 
For the fixed locus $\mathcal{M}_{\Gamma}$, 
We have $\mathcal C_e$ being a $\mathbb P^1$ toric orbifold and 
$\mathcal{C}_{e}\cong 
\mathbb{P}_{r_{(e,v)},r_{(e,v^{\prime})}}$. 
The map $f_{e}:\mathcal{C}_{e}\rightarrow \mathcal{P}$ 
satisfies 
$f_{e}(0)\in \mathcal{P}_{\sigma}$ and 
$f_{e}(\infty)\in \mathcal{P}_{\sigma^\prime}$. 
Hence, $f_{e}(\mathcal{C}_{e})$ is in a fiber of $\mathcal{P}$, 
therefore $\mathfrak D=0$, 
where $\mathcal{D}$ is the degree of $f$ and $\mathfrak D=\pi_*(\mathcal D)\in H_2(B)$.
The contribution $\text{C}(\Gamma)_{\sigma,b}$ is nontrivial only if 
\[
\phi_{\alpha}^{\sigma,\hat{b}}=
|N(\sigma)|e_{\mathbb{T}}
(N_{\sigma,b})1_{\sigma,\hat{b}}
\text{ and }\phi^{\alpha}_{\sigma,b}=[\mathcal{IP}_{\sigma,b}],
\]
where $[\mathcal{IP}_{\sigma,b}]$ 
is the fundamental class of $\mathcal{IP}_{\sigma,b}$, $N_{\sigma,b}$ is the normal bundle to 
$\mathcal{IP}_{\sigma,b}$ in $\mathcal{IP}_{b}$ 
and $1_{\sigma,\hat{b}}$ is the fundamental class of 
$\mathcal{IP}_{\sigma,\hat{b}}$ with $\hat{b}=inv(b)$.
The box element $\hat{b}\in Box(\sigma)$
is given by the restriction 
$f_{e}|_{0}:B\mu_{r_{(e,v)}}
\rightarrow \mathcal{P}_{\sigma}$. 
The morphism $f_e$ determines a rational number $c\in \mathbb Q$ and a box element 
$b^{\prime}\in Box(\sigma_{v^{\prime}})$. 
Since $\bar{\psi}_{1}=-r_{(e,v)}e_{\mathbb{T}}(T_{\eta(e,v)}\mathcal{C}_{e})$, 
using (\ref{local-contr}),
we obtain:
\begin{align*}
\mbox{C}(\Gamma)_{\sigma,b}=&
\frac{c_{\Gamma}}
{c_{\Gamma^{\prime}}}
h(e)
h(e,v)
h(e,v^\prime) \\
& \times \int_{[\overline{\mathcal{M}}^{(\hat{b},b)}_{0,2}(\mathcal{P}_{\sigma},0)]^{w}}
\frac{|N(\sigma)||e_{\mathbb{T}}(N_{\sigma,b})|}
{-z+r_{(e,v)}e_{\mathbb{T}}(T_{\eta(e,v)}\mathcal{C}_{e})}
\frac{1}
{(e_{\mathbb{T}}(T_{\eta(e,v)}\mathcal{C}_{e})-\bar{\psi}_{2}/r_{(e,v)})}
\cup h(v)\\
& \times \frac{r_{(e,v^{\prime})}}
{|N(\sigma^{\prime})||e_{\mathbb{T}}(N_{\sigma^{\prime},b^{\prime}})|}
\mbox{C}(\Gamma^{\prime})_{\sigma^{\prime},b^{\prime}}
|_{z=-r_{(e,v^{\prime})}(e_{\mathbb{T}}(T_{\eta(e,v^{\prime})}\mathcal{C}_{e})}
\end{align*}
Using \cite[(9.14)]{Liu} and the definition of $c_\Gamma$, $h(e), h(e,v),h(v)$, we write this as: 
\begin{align*}
\mbox{C}(\Gamma)_{\sigma,b}=&
\frac{|G_{v}|}
{d_{e}|G_{e}|}
h(e)
\frac{e_{\mathbb{T}}(N_{\sigma,b})}
{(-z+U_{j}(\sigma)/c)}
\mbox{C}(\Gamma^{\prime})_{\sigma^{\prime},b^{\prime}}
|_{z=-r_{(e,v^{\prime})}e_{\mathbb{T}}(T_{\eta(e,v^{\prime})}\mathcal{C}_{e})}\\
=& \frac{Rec(c)^{(\sigma^{\prime},b^{\prime})}_{(\sigma,b)}}
{(-z+U_{j}(\sigma)/c)}
\mbox{C}(\Gamma^{\prime})_{\sigma^{\prime},b^{\prime}}
|_{z=U_{j}(\sigma)/c}.
\end{align*}
Hence, the contribution to $f_{(\sigma,b)}$ from all graphs $\Gamma$ of type {\bf (A)} is:
\begin{equation}
\sum\limits_{\sigma^{\prime}:\sigma \dagger \sigma^\prime}
\sum\limits_{\substack{c\in\mathbb{Q}:c>0,\\\langle c\rangle=\hat{b}_{j}}}
q^{d_{c,\sigma,j}}
\frac{Rec(c)^{(\sigma^{\prime},b^{\prime})}_{(\sigma,b)}}
{(-z+U_{j}(\sigma)/c)}
[f_{(\sigma^{\prime},b^{\prime})}]_{z=U_{j}(\sigma)/c}.
\end{equation}
Hence we have proved $\textbf{(C2)}$, as well as $\textbf{(C1)}$.

To prove {\bf (C3)}, we define:
$t_{\sigma}(z):=
\sum\limits_{b\in \bbox(\sigma)}
t_{(\sigma,b)}(z)1_{b}$,
where 
\[
t_{(\sigma,b)}(z):=\textbf{t}_{(\sigma,b)}(z)+
\sum\limits_{\sigma^{\prime}:\sigma \dagger \sigma^\prime}
\sum\limits_{\substack{c\in\mathbb{Q}:c>0,\\\langle c\rangle=\hat{b}_{j}}}
q^{d_{c,\sigma,j}}
\frac{Rec(c)^{(\sigma^{\prime},b^{\prime})}_{(\sigma,b)}}
{(-z+U_{j}(\sigma)/c)}
[f_{(\sigma^{\prime},b^{\prime})}]_{z=U_{j}(\sigma)/c},
\]
and $t_{(\sigma,b)}(z)$ is expanded in terms of positive powers of $z$.

Then, $f_\sigma$ can be written as: 
\begin{align}\label{f_sigma}
\sum\limits_{b\in \bbox(\sigma)}
f_{(\sigma,b)}1_{b}
=& -1_{\sigma}z+t_{\sigma}(z)+
\sum\limits^{\infty}_{n=0}
\sum\limits_{\substack{d\in \NE(\X)\\ \mathfrak D\in \NE(B)}}
\sum\limits_{b\in \bbox(\sigma)}
\sum\limits_{\substack{\Gamma\in DT_{0,n+1}(\mathcal{P},\mathcal{D})\\ 
\Gamma \text{ is of type B}}}
\frac{Q^{\mathfrak D}q^{d}}{n!}\mbox{C}(\Gamma)_{\sigma,b}.
\end{align}
Then, we consider the contribution given by decorated trees $\Gamma$ of type {\bf (B)} 
such that $val(v)=l$, where $v$ is the distinguished vertex. The element $k_{1}$ 
associated to the first marking is $\hat{b}\in \bbox(\sigma)$. 
By integrating over all the factors 
$\overline{\mathcal{M}}^{\vec{b^{\prime}}}_{0,val(v^{\prime})}
(\mathcal{P}_{\sigma^{\prime}})$ except those 
associated with the distinguished vertex $v$, we can write these contributions as:
\[
\sum\limits_{\alpha}
\frac{1}
{\mbox{Aut}(\Gamma_{2},\ldots,\Gamma_{l})}
\left(
\int_{[\overline{\mathcal{M}}_{0,l}^{\hat{b},b^{2},\ldots,b^{l}}(\mathcal{P}_{\sigma},\mathfrak D)]^{w}}
\frac{\phi_{\alpha}^{\sigma,\hat{b}}}
{-z-\bar{\psi}}
\cup p_{2}(\textbf{t},\bar{\psi}_{2})
\cup\ldots\cup
p_{l}(\textbf{t},\bar{\psi}_{l})
\cup e^{-1}_{\mathbb{T}}((N_{\sigma}\mathcal{P})_{0,l,\mathfrak D})
\right ) 
\phi^{\alpha}_{\sigma,b}
\]
for some box elements $b^{2},\ldots,b^{l}\in \bbox(\sigma)$ 
and some polynomials $p_{i}(\textbf{t},\bar{\psi}_{i})$
in $t_{0},t_{1},\ldots,Q,q$ and $\bar{\psi}_{i}$.
The graph $\Gamma$ is obtained from joining type {\bf (A)} subgraphs 
$\Gamma_{2},\ldots,\Gamma_{l}$ at the vertex $v$. 
More precisely, $\Gamma_{i}$, for $2\leq i \leq l$, is of type {\bf (A)} and satisfies one of the following:
\begin{itemize}
\item $\Gamma_{i}$ consists of the distinguished vertex $v$ 
and two markings with the first marking coincides with the first marking of $\Gamma$. $val(v)=2$.

\item $\Gamma_{i}$ contains the distinguished vertex $v$ 
with exactly one marking that coincides with the first marking of $\Gamma$
and exactly one edge $e_{i}$ connecting $v$ with the rest of the graph. $val(v)=2$.
\end{itemize}
If $\Gamma_{i}$ consists of one vertex with two markings, 
then $p_{i}(\textbf{t},\bar{\psi}_{i})=\textbf{t}_{(\sigma,b^{i})}(\bar{\psi}_{i})$. Otherwise, 
\[
p_{i}(\textbf{t},\bar{\psi}_{i})=Q^{\mathfrak D}q^{d_{i}}\mbox{C}(\Gamma_{i})_{\sigma,b^{i}}|_{z=\bar{\psi}_{i}}
\]
where $d_{i}$ is the degree from the subgraph $\Gamma_{i}$. 
Summing over the contribution $\mbox{C}(\Gamma)_{\sigma,b}$ over all $\Gamma$ such that $val(v)=l$ gives the contribution
\[
\sum\limits_{\alpha}
\frac{1}{(l-1)!}
\left(
\int_{[\overline{\mathcal{M}}_{0,l}^{\hat{b},b^{2},\ldots,b^{l}}(\mathcal{P}_{\sigma},\iota_{\sigma}^{*}\mathcal{D})]^{w}}
\frac{\phi_{\alpha}^{\sigma,\hat{b}}}
{-z-\bar{\psi}}
\cup t_{\sigma}(\bar{\psi}_{2})
\cup\ldots\cup
t_{\sigma}(\bar{\psi}_{l})
\cup e^{-1}_{\mathbb{T}}((N_{\sigma}\mathcal{P})_{0,l,\mathfrak D})
\right ) 
\phi^{\alpha}_{\sigma,b}
\]
in (\ref{f_sigma}).
Hence, we have
\begin{align*}
f_{\sigma}= 
-1_{\sigma}z+t_{\sigma}(z)+\sum\limits_{l=1}^{\infty}
\sum\limits_{\mathfrak D\in \NE(\mathcal{P}_{\sigma})}
\sum\limits_{b\in \bbox(\sigma)}
\sum\limits_{\alpha}
\frac{1}{(l-1)!}
\langle \frac{\phi_{\alpha}^{\sigma,\hat{b}}}
{-z-\bar{\psi}}
,t_{\sigma}(\psi),
\ldots,t_{\sigma}(\psi)
\rangle^{\tw}_{0,l,\mathfrak D}\phi^{\alpha}_{\sigma,b}
\in \mathcal L^{tw}_\sigma
\end{align*}
i.e., the Laurent expansion at $z=0$ 
of $f_{\sigma}$ lies in the twisted Lagrangian cone $\mathcal{L}^{tw}_{\sigma}$. 
Thus we have proved $\textbf{(C3)}$.

To prove the other direction of the theorem, we assume that $f\in\mathcal{H}[[x]]$ 
with $f|_{Q=q=x=0}=-1z$ satisfies conditions {\bf (C1), (C2)}, and {\bf (C3)}. 
Then, from conditions {\bf(C1)} and {\bf(C2)}, we obtain that: 
\begin{equation}\label{f-sigma}
f_{\sigma}=
-1_{\sigma}z+\textbf{t}_{\sigma}+
\sum\limits_{b\in \bbox(\sigma)}1_{b}
\sum\limits_{\sigma^{\prime}:\sigma \dagger \sigma^\prime}
\sum\limits_{\substack{c\in\mathbb{Q}:c>0,\\\langle c\rangle=\hat{b}_{j}}}
q^{d_{c,\sigma,j}}
\frac{RC(c)^{(\sigma^{\prime},b^{\prime})}_{(\sigma,b)}}
{(-z+U_{j}(\sigma)/c)}
[f_{(\sigma^{\prime},b^{\prime})}]_{z=U_{j}(\sigma)/c}+O(z^{-1})
\end{equation}
for some $\textbf{t}_{\sigma}\in \mathcal{H}_{\sigma,+}[[x]]$ 
satisfying $\textbf{t}_{\sigma}|_{Q=q=t=0}=0$. 
The remainder $O(z^{-1})$ is a formal power series in $Q$, $q$ and $x$ 
with coefficients in $z^{-1}S_{\mathbb{T}}[z^{-1}]$.
Let $F$ be a $\Lambda^{\mathbb{T}}_{nov}[[x]]$-valued point 
on $\mathcal{L}_{\mathcal{P}}$ defined by 
(\ref{point-in-LP}) with $\textbf{t}=\tau$, where 
$\tau\in \mathcal{H}_{+}[[x]]$ is 
the unique element such that its restriction to 
$\mathcal{IP}_{\sigma}$ is $\textbf{t}_{\sigma}$. 
Then, we know that $F$ and $f$ both 
satisfy conditions {\bf (C1-C3)}, 
and they have the same restriction $\textbf{t}_{\sigma}$
in $\mathcal {IP}_\sigma$. 
Hence, it remains to show that $f$ can be uniquely determined by  
the set of elements $\{\textbf{t}_\sigma\}_{\sigma\in \Sigma_{top}}$.

To prove the uniqueness, we use induction on the degree with respect to $Q, q$ and $x$. 
Choose a K\"ahler class $\omega$ of $\mathcal{P}$, recall that the degree  
of the monomial $Q^{\mathfrak D}q^{d}x_{1}^{k_{1}}\cdots x_{m}^{k_{m}}$,
can be defined as $\langle \mathcal{D},\omega\rangle+\sum\limits_{i=1}^{m}k_{i}$.  
Let $\kappa_{0}$ denote the minimal degree of a non-trivial stable map to $\mathcal P$. 
Suppose that $f$ is uniquely determined from the collection 
$\{t_\sigma\}_{\sigma\in \Sigma_{top}}$
up to order $\kappa$. 
By the isomorphism (\ref{localization-isom}), we know that 
$f$ is uniquely determined by the collection of its restrictions $\{f_\sigma\}$,
hence to show $f$ is determined up to order $\kappa+\kappa_{0}$, 
we just need to show $f_{\sigma}$ is determined up to order $\kappa+\kappa_{0}$. 
We know by (\ref{f-sigma}) that 
$f_{\sigma}$ is determined up to order $\kappa+\kappa_{0}$ 
except for the remainder $O(z^{-1})$. 
On the other hand, 
since the Laurent expansion at $z=0$ of $f_{\sigma}$ lies in $\mathcal{L}^{tw}_{\sigma}$, 
equation (\ref{point-in-LP}) implies that 
the higher order terms $O(z^{-1})$ of $z^{-1}$ is also uniquely determined up to order $\kappa+\kappa_{0}$. 
The proof is completed.
\end{proof}

\section{Proof of the main theorem}\label{sec:proof_main_thm}
To prove Theorem \ref{main-theorem}, 
it suffices to show the $S$-extended $I$-function 
$I^{S}_{\cP}(z,t,\tau,q,x,Q)$ 
satisfies conditions {\bf (C1)-(C3)} in Theorem \ref{characterization}. 
Recall that the definition of $I^{S}_{\cP}(z,t,\tau,q,x,Q)$ is in Definition \ref{defn:I_func}.
Let $I^{S}_{\sigma}$ and $I^{S}_{(\sigma,b)}$ denote the restrictions of $I^{S}_{\cP}(z,t,\tau,q,x,Q)$ to the inertia stack $\mathcal{IP}_{\sigma}$ and the component $(\mathcal{P}_{\sigma})_{b}$ of the inertia stack $\mathcal{IP}_{\sigma}$ respectively.

\subsection{Condition {\bf (C1)}: Poles of $I$-function}
By Definition \ref{defn:I_func}, we have
\begin{align}\label{I-function-restriction}
I^{S}_{(\sigma,b)}=&
 e^{\sum^{n}_{i=1}U_{i}(\sigma)t_{i}/z}
\sum\limits_{\mathfrak D\in \NE(B)}
\sum\limits_{\lambda\in \Lambda E^{S}_{b}}
J_{\mathfrak D}(z,\tau)Q^{\mathfrak D}\tilde{q}^{\lambda}e^{\lambda t}\\
& \times \left( 
\prod\limits_{i \in \sigma}
\frac{\prod_{\langle a\rangle =\langle \lambda_{i}-c_{1}(L_{i})\cdot\mathfrak D\rangle,a\leq 0}(U_{i}(\sigma)+az)}
{\prod_{\langle a\rangle =\langle \lambda_{i}-c_{1}(L_{i})\cdot\mathfrak D\rangle,a\leq \lambda_{i}-c_{1}(L_{i})\cdot\mathfrak D}(U_{i}(\sigma)+az)}
\right)
\notag \left( \prod\limits_{i \not\in \sigma}\frac{\prod_{\langle a\rangle =0,a\leq 0}(az)}{\prod_{\langle a\rangle =0,a\leq \lambda_{i}-c_{1}(L_{i})\cdot\mathfrak D}(az)}\right)
\end{align}
where we identify the top-dimensional  cone $\sigma$ 
with the index set of its 1-cones and consider $\sigma\subset
\{
1,\ldots,n
\} \text{ as a subset of } 
\{
1,\ldots,n+m
\}
$.
Note that $U_{i}(\sigma)=0$ for 
$i\not\in \sigma$. 
We also need to have 
$\lambda_{i}-c_{1}(L_{i})\cdot\mathfrak D\geq 0$ 
for $i\not\in \sigma$, 
otherwise the contribution is zero. 
Therefore, we can see that 
$I^{S}_{(\sigma,b)}$ has essential singularity at 
$z=0$ and a finite order pole at $z=\infty$ and simple poles at
\[
z=-U_{i}(\sigma)/a\quad \text{with} 
\quad 0<a\leq \lambda_{i}-c_{1}(L_{i})\cdot\mathfrak D, 
\langle a\rangle=\langle \lambda_{i}-c_{1}(L_{i})\cdot\mathfrak D\rangle=
\langle \lambda_{i}\rangle=
\hat{b}_{i},i\in \sigma,
\]
for $\lambda\in \Lambda E^{S}_{b}$ contributing to the sum. 
To see it satisfies {\bf (C1)} of Theorem \ref{characterization}, 
it remains to invoke the following lemma, which is proved in \cite[Section 7.1]{CCIT}:
\begin{lemma} 
Consider a top dimensional cone $\sigma\in\Sigma_{top}$, if $\lambda_{i_{0}}>0$ for some $i_{0}\in \sigma$, 
then there exists another top-dimensional cone $\sigma^{\prime}\in\Sigma_{top}$, 
such that $\sigma \dagger \sigma^\prime$ and 
$i_{0}\in \sigma\setminus \sigma^{\prime}$. 
\end{lemma}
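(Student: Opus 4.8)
The statement to be proved is the combinatorial Lemma: if $\sigma\in\Sigma_{top}$ and $\lambda_{i_0}>0$ for some $i_0\in\sigma$, then there is $\sigma'\in\Sigma_{top}$ with $\sigma\dagger\sigma'$ and $i_0\in\sigma\setminus\sigma'$. The plan is to exploit the fact that $\sigma$ is a top-dimensional simplicial cone, so $\{\bar\rho_i : i\in\sigma\}$ is a $\Q$-basis of $\mathbf N_\Q$, and the ray $\Q_{\geq 0}\bar\rho_{i_0}$ is a genuine extremal ray of $\sigma$. First I would recall that, because $\mathcal X(\boldsymbol\Sigma)$ has semi-projective coarse moduli space, its fan $\Sigma$ is the normal fan of a (semi-projective) polytope, or at any rate is a fan whose support is a convex cone of full dimension; the key consequence I need is that every codimension-one face $\tau$ of a top-dimensional cone $\sigma$ which is \emph{not} on the boundary of $|\Sigma|$ is shared by exactly one other top-dimensional cone $\sigma'$, i.e. $\sigma\dagger\sigma'$.

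Next I would reduce to showing that the facet $\tau := \sigma\cap\{\text{drop the ray }\bar\rho_{i_0}\}$, that is $\tau = \operatorname{Cone}(\bar\rho_i : i\in\sigma,\ i\neq i_0)$, is an interior facet of $|\Sigma|$. This is where the hypothesis $\lambda_{i_0}>0$ enters. Recall $\lambda\in\Lambda^S_\sigma$ with $v^S(\lambda)=b$, and by the reduction-function identity $\overline{v^S(\lambda)} = \sum_{i=1}^n\langle-\lambda_i\rangle\bar\rho_i\in\sigma$. The point is that $\lambda\in\Lambda E^S_b$ means $\lambda$ is \emph{effective}, i.e. lies in $\NE^S(\mathcal X(\boldsymbol\Sigma))$; combined with $\lambda_{i_0}>0$ this forces the existence of an actual curve class pairing positively with the toric divisor $D_{i_0}$, and such a curve must break as a chain of $\mathbb P^1$'s each lying in a one-dimensional orbit closure, one of which crosses the wall dual to $\bar\rho_{i_0}$. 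Concretely, I would argue: since $D_{i_0}\cdot\lambda = \lambda_{i_0}>0$ and $\lambda$ is a limit of effective curve classes, the divisor $D_{i_0}$ is not nef along $\lambda$, hence the ray $\bar\rho_{i_0}$ cannot be a ray on the boundary $\partial|\Sigma|$ (otherwise $D_{i_0}$ would be a pullback of an effective divisor making all effective curves pair non-negatively, a contradiction). Therefore $\bar\rho_{i_0}$ is an interior ray, and the facet $\tau$ of $\sigma$ obtained by omitting it lies in the interior of $|\Sigma|$, so it is shared with a unique $\sigma'\in\Sigma_{top}$; by construction $i_0\in\sigma\setminus\sigma'$ and $\sigma\dagger\sigma'$, as required.

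The alternative, and probably cleaner, route is purely fan-theoretic and avoids curve-class arguments: use that $\lambda\in\Lambda^S_\sigma\cap\NE^S$, write $\lambda = \sum_i\lambda_i e_i$, and observe that the defining conditions of $\Lambda^S_\sigma$ say $\lambda_i\in\Z$ for $i\notin\sigma$. If the facet $\tau=\operatorname{Cone}(\bar\rho_i:i\in\sigma\setminus\{i_0\})$ were on $\partial|\Sigma|$, then the linear functional $u_{i_0}^\vee$ (the element of $\mathbf N^*_\Q$ vanishing on $\bar\rho_i$, $i\in\sigma\setminus\{i_0\}$, and positive on $\bar\rho_{i_0}$) would be non-negative on all of $|\Sigma|$, hence $u_{i_0}^\vee$ would give a relation in $(\mathbb Z^*)^{n+m}$ that, pushed through $\rho^{S\vee}$, exhibits $D_{i_0}$ modulo the other $D_i$ with $i\in\sigma$ as a nef class; but then $\langle D_{i_0},\lambda\rangle\geq 0$ would force $\lambda_{i_0}\leq 0$ once we account for the sign in the reduction function $\overline{v^S(\lambda)}=\sum\langle-\lambda_i\rangle\bar\rho_i$, contradicting $\lambda_{i_0}>0$. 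Hence $\tau$ is interior and $\sigma'$ exists.

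The main obstacle is the bookkeeping of signs and of which cone $\lambda$ or $\overline{v^S(\lambda)}$ actually lives in: the reduction function introduces a sign flip ($\langle-\lambda_i\rangle$ versus $\lambda_i$), and the hypothesis is stated for the raw component $\lambda_{i_0}$ rather than for $\langle-\lambda_{i_0}\rangle$ or $\lceil\lambda_{i_0}\rceil$, so I must be careful to translate "$\lambda_{i_0}>0$" into the correct statement that the curve/divisor pairing $D_{i_0}\cdot\lambda$ is genuinely positive and not merely non-negative. Once that is pinned down, the fan-theoretic step — an interior facet of a full-dimensional simplicial fan is shared by exactly two maximal cones — is standard and follows from semi-projectivity of $\mathcal X(\boldsymbol\Sigma)$; this is exactly the content of \cite[Section 7.1]{CCIT}, which handles the toric (non-bundle) case, and the argument is unchanged here since the Lemma concerns only the fiber combinatorics of $\boldsymbol\Sigma$ and not the bundle $P\to B$.
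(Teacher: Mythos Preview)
Your bottom line matches the paper exactly: the paper gives no independent argument for this lemma and simply writes ``which is proved in \cite[Section 7.1]{CCIT}''. You correctly observe at the end that the statement concerns only the fan $\boldsymbol\Sigma$ of the fibre and is therefore identical to the toric (non-bundle) case handled in \cite{CCIT}; that sentence alone is an adequate proof here.

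However, the sketches you offer before that citation contain a genuine logical slip that you should not let stand. In both versions you argue: $D_{i_0}\cdot\lambda=\lambda_{i_0}>0$, hence a contradiction with $D_{i_0}$ (or a related class) being nef. This is backwards. A nef divisor pairs \emph{non-negatively} with effective classes, so $\lambda_{i_0}>0$ is perfectly compatible with nefness and yields no contradiction. Relatedly, in your first sketch you speak of ``the ray $\bar\rho_{i_0}$ \dots on the boundary $\partial|\Sigma|$'', but the relevant object is the facet $\tau=\operatorname{Cone}(\bar\rho_i:i\in\sigma,\ i\neq i_0)$ opposite $\bar\rho_{i_0}$, not the ray $\bar\rho_{i_0}$ itself. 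The shape of the correct argument is the one you almost reach in your second sketch: if $\tau\subset\partial|\Sigma|$, the supporting functional $m\in\mathbf N^*_{\mathbb Q}$ (vanishing on $\tau$, positive on $\bar\rho_{i_0}$) is non-negative on all of $|\Sigma|$ by convexity of the support; the resulting linear relation $\sum_j m(\bar\rho_j)D_j=0$ then pairs with $\lambda$ to give
\[
m(\bar\rho_{i_0})\,\lambda_{i_0} \;=\; -\sum_{j\notin\sigma} m(\bar\rho_j)\,\lambda_j,
\]
and it is the non-negativity of the $\lambda_j$ for $j\notin\sigma$ (which holds for the terms actually contributing to $I^S_{(\sigma,b)}$) that forces $\lambda_{i_0}\leq 0$, the desired contradiction. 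Your ``$\langle D_{i_0},\lambda\rangle\geq 0$ would force $\lambda_{i_0}\leq 0$'' has the inequality the wrong way round and omits the input $\lambda_j\geq 0$ for $j\notin\sigma$. Since the paper simply defers to \cite{CCIT}, the cleanest fix is to drop the sketches and keep only your final paragraph.
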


\subsection{Condition {\bf (C2)}: Recursion relations} 
let $\sigma,\sigma^{\prime}\in\Sigma_{top}$ be 
top-dimensional cones with $\sigma \dagger \sigma^\prime$. 
Let $b\in \bbox(\sigma)$ and 
fix a positive rational number $c$ 
such that $\langle c\rangle=\hat{b}_{j}$. 
We study the residue of $I^{S}_{(\sigma,b)}$ 
at $z=-U_{j}(\sigma)/c$. Write
\[
\Delta_{\lambda,i,\sigma,\mathfrak D}(z):=
\frac{\prod_{\langle a \rangle=\langle \lambda_{i}\rangle, a\leq 0}
U_{i}(\sigma)+az}
{\prod_{\langle a \rangle=\langle \lambda_{i}\rangle, a\leq \lambda_{i}-c_{1}(L_{i})\cdot\mathfrak D}
U_{i}(\sigma)+az}
\]
for $\lambda\in \Lambda^{S}$, $1\leq i\leq n+m$, and $\mathfrak D\in \NE(B)$. 
The residue of (\ref{I-function-restriction}) is given by:
\begin{equation}\label{I-function-residue}
 e^{\frac{\sum^{n}_{i=1}U_{i}(\sigma)t_{i}}
{-U_{j}(\sigma)/c}}
\frac{1}{c}
\sum\limits_{\mathfrak D\in \NE(B)}
\sum\limits_{\substack{\lambda\in \Lambda E^{S}_{b}\\ \lambda_{j}\geq c}}
J_{\mathfrak D}(-U_{j}(\sigma)/c,\tau)Q^{\mathfrak D}\tilde{q}^{\lambda}e^{\lambda t}
\frac{\prod_{i:i\neq j}\Delta_{\lambda,i,\sigma,\mathfrak D}(-U_{j}(\sigma)/c)}
{\prod_{\substack{0<a\leq \lambda_{j}-c_{1}(L_{j})\cdot\mathfrak D,\langle a \rangle =\langle \lambda_{j}\rangle \\ a\neq c}
}
(U_{j}(\sigma)-a\frac{U_{j}(\sigma)}{c})
}.
\end{equation}

Consider the change of variables 
\[
\lambda=\lambda^{\prime}+d_{c,\sigma,j}
\]
where $\lambda^{\prime}\in \Lambda^{S}_{b^{\prime}}$. We write 
\[
c_{i}=
D_{i}\cdot d_{c,\sigma,j}, \text{ for }1\leq i \leq n; 
\quad c_{j}=c; 
\quad c_{j^{\prime}}=c^{\prime}; 
\quad c_{i}=0, \text{ for }n+1\leq i \leq n+m.
\]
For $1\leq i \leq n$, 
consider the representable morphism 
$f:\mathbb{P}_{r_{1},r_{2}}\rightarrow \mathcal{P}$ given by 
Proposition \ref{toric-morphism}
with $f(0)\in \mathcal{P}_{\sigma}$ and 
$f(\infty)\in \mathcal{P}_{\sigma^\prime}$, then applying the localization formula, we have
\[
c_{i}=D_{i}\cdot d_{c,\sigma,j}=\int_{\mathbb{P}_{r_{1},r_{2}}}f^{*}D_{i}
=\int_{\mathbb{P}_{r_{1},r_{2}}}^{\mathbb{T}}f^{*}U_{i}
=\frac{U_{i}(\sigma)}
{U_{j}(\sigma)/c}+
\frac{U_{i}(\sigma^{\prime})}{-U_{j}(\sigma^{\prime})/c^{\prime}}
=\frac{U_{i}(\sigma)}
{U_{j}(\sigma)/c}+
\frac{U_{i}(\sigma^{\prime})}{-U_{j}(\sigma)/c}
\]
Hence we obtain 
\begin{equation}\label{U-i-sigma}
U_{i}(\sigma)=U_{i}(\sigma^{\prime})+\frac{c_{i}}{c}U_{j}(\sigma).
\end{equation}
Hence, by equation (\ref{U-i-sigma}), we have the following three equations
\begin{equation}
\frac{\sum^{n}_{i=1}U_{i}(\sigma)t_{i}}
{-U_{j}(\sigma)/c}+\lambda t
=\frac{\sum^{n}_{i=1}U_{i}(\sigma^{\prime})t_{i}}
{-U_{j}(\sigma)/c}+\lambda^{\prime} t;
\end{equation}
\begin{equation}
\Delta_{\lambda,i,\sigma,\mathfrak D}\left(-\frac{U_{j}(\sigma)}{c}\right)
=\Delta_{\lambda^{\prime},i,\sigma^{\prime},\mathfrak D}\left(-\frac{U_{j}(\sigma)}{c}\right)
\frac{\prod_{a\leq 0, \langle a\rangle =
\langle \lambda_{i}\rangle}(U_{i}(\sigma)-\frac{a}{c}U_{j}(\sigma))}
{\prod_{a\leq c_{i}, \langle a\rangle =
\langle \lambda_{i}\rangle}(U_{i}(\sigma)-\frac{a}{c}U_{j}(\sigma))}, \quad 
\text{for } i\neq j;
\end{equation}
\begin{equation}
\prod\limits_{\substack{0<a\leq \lambda_{j}-c_{1}(L_{j})\cdot\mathfrak D,\\
\langle a \rangle =
\langle \lambda_{j} \rangle, a\neq c}}
\left(
U_{j}(\sigma)-a\frac{U_{j}(\sigma)}{c}
\right)
=\prod\limits_{\substack{-c<a\leq \lambda^{\prime}_{j}-c_{1}(L_{j})\cdot\mathfrak D,\\\langle a \rangle =\langle \lambda_{j} \rangle, a\neq 0}}
\left(
-a\frac{U_{j}(\sigma)}{c}
\right).
\end{equation}

Applying the above three equations we see that (\ref{I-function-residue}), 
the residue of $I^S_{(\sigma,b)}$ at $z=-\frac{U_i(\sigma)}{c}$, is given by:
\begin{align*}
& e^{\frac{\sum^{n}_{i=1}U_{i}(\sigma^{\prime})t_{i}}
{-U_{j}(\sigma)/c}}
\frac{1}{c}
\sum\limits_{\mathfrak D\in \NE(B)}
\sum\limits_{\substack{\lambda^{\prime}\in \Lambda E^{S}_{b^{\prime}}\\ \lambda^{\prime}_{j}\geq 0}}
J_{\mathfrak D}(-U_{j}(\sigma)/c,\tau)Q^{\mathfrak D}\tilde{q}^{\lambda^{\prime}}q^{d_{c,\sigma,j}}e^{\lambda^{\prime} t}
 \frac{\prod_{i:i\neq j}\Delta_{\lambda^{\prime},i,\sigma^{\prime},\mathfrak D}(-U_{j}(\sigma)/c)}
{
\prod\limits_{
\substack{0<a\leq \lambda^{\prime}_{j}-c_{1}(L_{j})\cdot\mathfrak D,\\\langle a \rangle =\langle \lambda_{j} \rangle, a\neq 0}
}
\left(
U_{j}(\sigma)-a\frac{U_{j}(\sigma)}{c}
\right)
}\\
& \times \prod\limits_{i:i\neq j}
\frac{\prod_{a\leq 0, \langle a\rangle =\langle \lambda_{i}\rangle}(U_{i}(\sigma)-\frac{a}{c}U_{j}(\sigma))}
{\prod_{a\leq c_{i}, \langle a\rangle =\langle \lambda_{i}\rangle}(U_{i}(\sigma)-\frac{a}{c}U_{j}(\sigma))}\\
=& q^{d_{c,\sigma,j}}\frac{1}{c}\frac{1}
{\prod\limits_{\substack{0<a<c,\\a\in \mathbb{Z}}}\left(a\frac{U_{j}(\sigma)}{c}\right)}
\prod\limits_{i\in \sigma^{\prime}}
\frac{\prod_{a\leq 0, \langle a\rangle =
\langle \lambda_{i}\rangle}(U_{i}(\sigma)-\frac{a}{c}U_{j}(\sigma))}
{
\prod_{a\leq c_{i}, \langle a\rangle =
\langle \lambda_{i}\rangle}
(U_{i}(\sigma)-\frac{a}{c}U_{j}(\sigma))
}
I^{S}_{(\sigma^{\prime},b^{\prime})}|_{z=-U_{j}(\sigma)/c}. 
\end{align*}
By a direct computation, we obtain
\[
\frac{1}{c}\frac{1}
{\prod\limits_{\substack{0<a<c,\\a\in \mathbb{Z}}}\left(a\frac{U_{j}(\sigma)}{c}\right)}
\prod\limits_{i\in \sigma^{\prime}}
\frac{\prod_{a\leq 0, \langle a\rangle =\langle \lambda_{i}\rangle}(U_{i}(\sigma)-\frac{a}{c}U_{j}(\sigma))}
{\prod_{a\leq c_{i}, \langle a\rangle =\langle \lambda_{i}\rangle}(U_{i}(\sigma)-\frac{a}{c}U_{j}(\sigma))}
=Rec(c)^{(\sigma^{\prime},b^{\prime})}_{(\sigma,b)}.
\]
This proves the recursion for the $S$-Extended $I$-function.

\subsection{Condition {\bf (C3)}: Restriction to fixed points.}
Consider a top dimensional cone $\sigma\in\Sigma_{top}$, 
we need to show that 
$I^{S}_{\sigma}$ lies on the Lagrangian cone $\mathcal{L}^{tw}_{\sigma}$. 
We will need to use the decomposition theorem of Gromov-Witten theory of $\mu$-gerbe over the base $B$ as in \cite{AJT09}.

By Definition \ref{defn:I_func}, we have
\begin{align}\label{I^S_sigma}
I^{S}_{\sigma} = &  e^{\sum^{n}_{i=1}U_{i}(\sigma)t_{i}/z}
\sum\limits_{\mathfrak D\in \NE(B)}
\sum\limits_{\substack{\lambda\in \Lambda^{S}_{\sigma}\\ \lambda_{i}-c_{1}(L_{i})\cdot\mathfrak D\in\mathbb{Z}_{\geq 0} \text{ if } i \not\in\sigma}}
J_{\mathfrak D}(z,\tau)Q^{\mathfrak D}\tilde{q}^{\lambda}e^{\lambda t}\\
& \times \left(
 \prod\limits_{i \in \sigma}
\frac{\prod_{\langle a \rangle =\langle \lambda_{i}-c_{1}(L_{i})\cdot\mathfrak D\rangle,a\leq 0}(U_{i}(\sigma)+az)}
{\prod_{\langle a\rangle =\langle \lambda_{i}-c_{1}(L_{i})\cdot\mathfrak D\rangle,a\leq \lambda_{i}-c_{1}(L_{i})\cdot\mathfrak D}(U_{i}(\sigma)+az)}
\right)
\notag 
\left(
 \prod\limits_{i \not\in \sigma}
\frac{\prod_{\langle a\rangle =0,a\leq 0}(az)}
{\prod_{\langle a\rangle =0,a\leq \lambda_{i}-c_{1}(L_{i})\cdot\mathfrak D}(az)}
\right)
1_{v^{S}(\lambda)}
\end{align}
where $1_{v^{S}(\lambda)}\in H^{*}_{\CR}(\mathcal{P}_{\sigma})$ 
is the identity class on the twisted sector of $\mathcal{P}_{\sigma}$
corresponding to the box element $v^{S}(\lambda)\in \bbox(\sigma)$. 
By string equation, 
$\mathcal{L}^{tw}_{\sigma}$ is invariant under multiplication by 
$ e^{\sum^{n}_{i=1}U_{i}(\sigma)t_{i}/z}$, 
hence we can remove this factor from (\ref{I^S_sigma}).

Let $\pi(\sigma)$ be the quotient map $\textbf{N} \rightarrow \textbf{N}(\sigma)$.
We have
\[
v^{S}(\lambda)=
\sum\limits_{j\in\sigma}
\lceil \lambda_{j}\rceil \rho_{j}+
\sum\limits_{i\not\in\sigma,i\leq n}
\lambda_{i}\rho_{i}+
\sum\limits_{i=1}^{m}
\lambda_{n+i}s_{i}\equiv 
\sum\limits_{i\not\in\sigma}\lambda_{i}b^{i}_\sigma 
\quad \text{mod }\textbf{N}_{\sigma},
\]
where
\[
b^{i}_\sigma=\left\{
\begin{array}{lr}
\pi(\sigma)(\rho_i), \quad 1\leq i \leq n;\\
\pi(\sigma)(s_{i-n}), \quad n+1\leq i\leq n+m.
\end{array}
\right.
\]
We also introduce variables 
$\{q_{i}\}_{i\not\in \sigma}$ 
and the change of variables:
\[
Q^\mathfrak D\tilde{q}^{\lambda}e^{\lambda t}=
(\prod\limits_{i\not\in \sigma}q_{i}^{\lambda_{i}-c_{1}(L_{i})\cdot\mathfrak D})Q^\mathfrak D
\]
It remains to show that 
\begin{align}\label{restriction-sigma}
\sum\limits_{\mathfrak D\in \NE(B)}
\sum\limits_{
\substack{
\lambda\in \Lambda^{S}_{\sigma}\\ 
\lambda_{i}-c_{1}(L_{i})\cdot\mathfrak D\in\mathbb{Z}_{\geq 0} 
\text{ if } i \not\in\sigma
}
}
Q^{\mathfrak D}J_{\mathfrak D}(z,\tau)
\left( 
\prod\limits_{i \in \sigma}
\frac{\prod_{\langle a \rangle =\langle \lambda_{i}-c_{1}(L_{i})\cdot\mathfrak D\rangle,a\leq 0}(U_{i}(\sigma)+az)}
{\prod_{\langle a\rangle =\langle \lambda_{i}-c_{1}(L_{i})\cdot\mathfrak D\rangle,a\leq \lambda_{i}-c_{1}(L_{i})\cdot\mathfrak D}(U_{i}(\sigma)+az)}
\right)\\
\notag 
\times \left( 
\prod\limits_{i \not\in \sigma}
\frac{q_{i}^{\lambda_{i}-c_{1}(L_{i})\cdot\mathfrak D}\prod_{\langle a\rangle =0,a\leq 0}(az)}
{\prod_{\langle a\rangle =0,a\leq \lambda_{i}-c_{1}(L_{i})\cdot\mathfrak D}(az)}
\right)
1_{\sum_{i\not\in \sigma}\lambda_{i}b^{i}_\sigma}
\end{align}
is a $S_{\mathbb{T}}[[q]]$-valued point on the twisted Lagrangian cone $\mathcal{L}^{tw}_{\sigma}$ of $\mathcal P_{\sigma}$.

By construction, $\mathcal P_\sigma$ is a product of root gerbes over the base $B$. 
\begin{lemma}
For each top dimensional cone $\sigma$, the series
\begin{equation}\label{P-sigma-function}
\sum\limits_{\mathfrak D\in \NE(B)} 
\sum\limits_{b\in \bbox(\sigma)}
Q^{\mathfrak D}
J_{\mathfrak D}(z,\tau)1_{b}
\end{equation}
lies on the Lagrangian cone $\mathcal{L}$ 
of the untwisted theory of $\mathcal{P}_{\sigma}$
\end{lemma}
\begin{proof}
By the definition of J-function, we have
\begin{align*}
\sum\limits_{\mathfrak D\in \NE(B)} 
\sum\limits_{b\in \bbox(\sigma)}
Q^{\mathfrak D}
J_{\mathfrak D}(z,\tau)1_{b}
&=\sum\limits_{b\in \bbox(\sigma)}
(z+\tau+\sum_{n,\mathfrak D}\sum_\alpha\frac{Q^\mathfrak D}{n!}\langle \frac{\phi_\alpha}{z-\bar{\psi}},\tau,\ldots,\tau\rangle_{0,n+1,\mathfrak D}^B\phi^\alpha)1_{b}\\
&= z\sum\limits_{b\in \bbox(\sigma)}
\sum_{n,\mathfrak D}\sum_\alpha\frac{Q^\mathfrak D}{n!}\langle \frac{\phi_\alpha}{z-\bar{\psi}},1,\tau,\ldots,\tau\rangle_{0,n+2,\mathfrak D}^B\phi^\alpha\otimes 1_{b},
\end{align*}
where the second equation is from string euqation. By \cite[Proposition 5.2]{Jiang}, the Chen-Ruan cohomology ring of the toric stack bundle $\mathcal P_\sigma$ is given by
\begin{equation}\label{QH*-gerbe}
H^*_{CR}(\mathcal P_\sigma;\mathbb Q)\cong H^*(B;\mathbb Q)\otimes H^*_{CR}(BG_\sigma;\mathbb Q)
\end{equation}
where $G_\sigma$ is the isotropy group at generic points of $\mathcal P_\sigma$. Recall that $\tau$ takes value in $H^*(B,\mathbb Q)\cong H^*(\mathcal P_{\sigma},\mathbb Q)\subset H^*_{CR}(\mathcal P_\sigma,\mathbb Q)$. By \cite[Theorem 7.1]{AJT10}, 
\begin{align*}
\langle \frac{\phi_\alpha}{z-\bar{\psi}},1,\tau,\ldots,\tau\rangle_{0,n+2,\mathfrak D}^B=|G_\sigma|\langle \frac{\phi_\alpha\otimes 1_{\hat b}}{z-\bar{\psi}},1_{\tilde{b}},\tau,\ldots,\tau\rangle_{0,n+2,\mathfrak D}^{\mathcal P_\sigma}
\end{align*}
where $\hat{b}=inv(b)$ is the involution of $b$ and $\tilde b$ is uniquely determined by the $\mathfrak D$-admissible condition defined in \cite[Definition 3.3, Remark 3.7]{AJT09} and \cite[Definition 4.5]{AJT10}. Note that $\tilde b$ depends on $b$ and $\mathfrak D$. Moreover, by \cite[Theorem 4.4]{AJT09} and \cite[Theorem 7.1]{AJT10}, for each $\check b\in \bbox(\sigma)\setminus{\tilde b}$, the invariant
$\langle \frac{\phi_\alpha\otimes 1_{\hat b}}{z-\bar{\psi}},1_{\check{b}},\tau,\ldots,\tau\rangle_{0,n+2,\mathfrak D}^{\mathcal P_\sigma}$
vanishes. Therefore
\[
\langle \frac{\phi_\alpha\otimes 1_{\hat b}}{z-\bar{\psi}},1_{\tilde{b}},\tau,\ldots,\tau\rangle_{0,n+2,\mathfrak D}^{\mathcal P_\sigma}=\langle \frac{\phi_\alpha\otimes 1_{\hat b}}{z-\bar{\psi}},\sum_{\underline b\in \bbox(\sigma)}1_{\underline b},\tau,\ldots,\tau\rangle_{0,n+2,\mathfrak D}^{\mathcal P_\sigma},
\]
for every $b$ and $\mathfrak D$. Notice that $\sum_{\underline b\in \bbox(\sigma)}1_{\underline b}$ does not depend on $b$ or $\mathfrak D$.
Then, (\ref{P-sigma-function}) can be written as
\begin{align*}
z|G_\sigma|\sum\limits_{b\in \bbox(\sigma)}
\sum_{n,\mathfrak D}\sum_\alpha\frac{Q^\mathfrak D}{n!}\langle \frac{\phi_\alpha\otimes 1_{\hat b}}{z-\bar{\psi}},\sum_{\underline b\in \bbox(\sigma)}1_{\underline b},\tau,\ldots,\tau\rangle_{0,n+2,\mathfrak D}^{\mathcal P_\sigma}\phi^\alpha \otimes1_{b}
\end{align*}
By (\ref{QH*-gerbe}), $\{\phi^\alpha\otimes 1_b\}_{b\in \bbox{\sigma},\alpha}$ forms a basis of $H^*_{CR}(\mathcal P_\sigma;\mathbb Q)$, so we have
\begin{align*}
& z|G_\sigma|\sum\limits_{b\in \bbox(\sigma)}
\sum_{n,\mathfrak D}\sum_\alpha\frac{Q^\mathfrak D}{n!}\langle \frac{\phi_\alpha\otimes 1_{\hat b}}{z-\bar{\psi}},\sum_{\underline b\in \bbox(\sigma)}1_{\underline b},\tau,\ldots,\tau\rangle_{0,n+2,\mathfrak D}^{\mathcal P_\sigma}\phi^\alpha \otimes1_{b}\\
=& z|G_\sigma| S_{\mathcal P_\sigma}(\tau,z)(\sum_{\underline b\in \bbox(\sigma)}1_{\underline b}),
\end{align*} 
by the definition of the $S$-operator (\ref{S-operator}). Hence, we conclude that (\ref{P-sigma-function}) lies on the Lagrangian cone $\mathcal{L}$ 
of the untwisted theory of $\mathcal{P}_{\sigma}$.
\end{proof}

Notice that we have the following identity
\begin{align*}
\sum\limits_{
\substack{
\lambda\in \Lambda^{S}_{\sigma}\\ 
\lambda_{i}-c_{1}(L_{i})\cdot\mathfrak D\in\mathbb{Z}_{\geq 0} 
\text{ if } i \not\in\sigma
}
}
\left( 
\prod\limits_{i \not\in \sigma}
\frac{q_{i}^{\lambda_{i}-c_{1}(L_{i})\cdot\mathfrak D}\prod_{\langle a\rangle =0,a\leq 0}(az)}
{\prod_{\langle a\rangle =0,a\leq \lambda_{i}-c_{1}(L_{i})\cdot\mathfrak D}(az)}
\right)
1_{\sum_{i\not\in \sigma}\lambda_{i}b^{i}_\sigma}= \sum\limits_{b\in \bbox(\sigma)}
\exp(\sum\limits_{i\not\in \sigma}q_{i}/z)1_{b}.
\end{align*}

Hence, we have
\begin{align}\label{J-function-gerbe}
& \sum\limits_{\mathfrak D\in \NE(B)}
\sum\limits_{
\substack{
\lambda\in \Lambda^{S}_{\sigma}\\ 
\lambda_{i}-c_{1}(L_{i})\cdot\mathfrak D\in\mathbb{Z}_{\geq 0} 
\text{ if } i \not\in\sigma
}
}
Q^\mathfrak D
J_{\mathfrak D}(z,\tau)
\left( 
\prod\limits_{i \not\in \sigma}
\frac{q_{i}^{\lambda_{i}-c_{1}(L_{i})\cdot\mathfrak D}\prod_{\langle a\rangle =0,a\leq 0}(az)}
{\prod_{\langle a\rangle =0,a\leq \lambda_{i}-c_{1}(L_{i})\cdot\mathfrak D}(az)}
\right)
1_{\sum_{i\not\in \sigma}\lambda_{i}b^{i}_\sigma}\\
\notag =& 
\sum\limits_{\mathfrak D\in \NE(B)} 
\sum\limits_{b\in \bbox(\sigma)}
Q^\mathfrak D
\exp(\sum\limits_{i\not\in \sigma}q_{i}/z)J_{\mathfrak D}(z,\tau)1_{b}.
\end{align}
lies on the untwisted Lagrangian cone $\mathcal{L}$ of $\mathcal P_{\sigma}$.

We will need to use Tseng's orbifold quantum Riemann-Roch theorem in \cite{Tseng}
to prove (\ref{restriction-sigma}) lies in the 
twisted Lagrangian cone $\mathcal L^{tw}_{\sigma}$.
We recall some notations here:

Let $V$ be the direct sum of 
$d$ vector bundles $V^{(j)}$, for $1\leq j \leq d$, and 
consider a universal multiplicative characteristic class:
\[
\textbf{c}(V)=
\prod\limits_{j=1}^{d}
\exp\left(\sum\limits_{k=0}^{\infty}s_{k}^{(j)}ch_{k}(V^{(j)})\right)
\]
where $s_{0}^{(j)},s_{1}^{(j)},s_{2}^{(j)}$,\ldots are formal indeterminates. 
We consider the special case where $V=N_{\sigma}\mathcal{P}$, 
which is the direct sum of line bundles $U_{j}(\sigma)$, for $j\in \sigma$, over $\mathcal{P}_{\sigma}$. 
For $j\in\sigma$, we set 
\[
s^{(j)}_{k}=\left\{
\begin{array}{lr}
-\text{log}U_{j}(\sigma),& k=0\\
(-1)^{k}(k-1)!U_{j}(\sigma)^{-k},& k\geq 1
\end{array}
\right.
\]
Then, we obtain the $(N_{\sigma}\mathcal{P},e_{\mathbb{T}^{-1}})$-twisted Gromov-Witten theory of 
$\mathcal{P}_{\sigma}$. Recall that $\mathcal{L}^{tw}$ is the Lagrangian cone of 
the $(N_{\sigma}\mathcal{P},e_{\mathbb{T}^{-1}})$-twisted Gromov-Witten theory of 
$\mathcal{P}_{\sigma}$. By direct computation, we obtain the following equation:
\begin{equation}\label{exp-s-j-k}
\exp(s^{(j)}(x))
=(U_{j}(\sigma)+x)^{-1},
\end{equation}
where $s^{(j)}(x):=\left(\sum\limits_{k=0}^{\infty}s_{k}^{(j)}\frac{x^{k}}{k!} \right)$.

As in \cite{CCIT09}, we introduce the function:
\[
G^{(j)}_{y}(x,z):=
\sum\limits_{l,m\geq 0}
s^{(j)}_{l+m-1}
\frac{B_{m}(y)}{m!}
\frac{x^{l}}{l!}
z^{m-1}\in \mathbb{C}[y,x,z,z^{-1}]
[[s_{0}^{(j)},s_{1}^{(j)},s_{2}^{(j)},\ldots]],
\]
By \cite{CCIT09}, the function $G^{(j)}_{y}(x,z)$ satisfies the following two relations:
\begin{align}\label{relation-G-j-1}
G_{y}^{(j)}(x,z)=& G_{0}^{(j)}(x+yz,z);\\
\label{relation-G-j-2}
G_{0}^{(j)}(x+z,z)=& G_{0}^{(j)}(x,z)+s^{(j)}(x).
\end{align}

Let $\theta_{j}=
\left(\sum_{i\not\in \sigma}
c_{ij}q_{i}(\partial/\partial q_{i})\right)$, where rational numbers $c_{ij}$ for $i\not\in \sigma$ and $j\in\sigma$ are defined by
\[
\bar{\rho}_i=\sum\limits_{i\in\sigma}c_{ij}\bar{\rho}_j, \text{ for } 1\leq i\leq n; \quad \bar{s}_{i}=\sum\limits_{j\in \sigma}c_{ij}\bar{\rho}_j, \text{ for } 1\leq i \leq m.
\]
Hence, rational numbers $c_{ij}$ satisfy the following equation:
\[
\lambda_j=-\sum\limits_{i\not\in\sigma}c_{ij}\lambda_i, \text{ for }\lambda\in \Lambda^S_\sigma\text{ and } j\in\sigma.
\]
We apply the differential operator 
$\exp(-\sum_{j\in\sigma}G_{0}^{(j)}(z\theta_{j},z))$ 
to (\ref{J-function-gerbe}), then we have:
\begin{align*}
 \textbf{L}:=
\exp\left(-\sum_{j\in\sigma}G_{0}^{(j)}(z\theta_{j},z)\right)
 \sum\limits_{\mathfrak D\in \NE(B)}
\sum\limits_{\substack{\lambda\in \Lambda^{S}_{\sigma}\\ \lambda_{i}-c_{1}(L_{i})\cdot\mathfrak D\in\mathbb{Z}_{\geq 0} \text{ if } i \not\in\sigma}}
Q^\mathfrak D
J_{\mathfrak D}(z,\tau)\\
 \times 
\left(
 \prod\limits_{i \not\in \sigma}
\frac{q_{i}^{\lambda_{i}-c_{1}(L_{i})\cdot\mathfrak D}\prod_{\langle a\rangle =0,a\leq 0}(az)}
{\prod_{\langle a\rangle =0,a\leq \lambda_{i}-c_{1}(L_{i})\cdot\mathfrak D}(az)}
\right)
1_{\sum_{i\not\in \sigma}\lambda_{i}b^{i}_\sigma} \\
\end{align*}
\begin{align*}
& =  \sum\limits_{\mathfrak D\in \NE(B)}
\sum\limits_{\substack{\lambda\in \Lambda^{S}_{\sigma}\\ \lambda_{i}-c_{1}(L_{i})\cdot\mathfrak D\in\mathbb{Z}_{\geq 0} \text{ if } i \not\in\sigma}}
Q^\mathfrak D
J_{\mathfrak D}(z,\tau)
\left(
 \prod\limits_{i \not\in \sigma}
\frac{q_{i}^{\lambda_{i}-c_{1}(L_{i})\cdot\mathfrak D}\prod_{\langle a\rangle =0,a\leq 0}(az)}
{\prod_{\langle a\rangle =0,a\leq \lambda_{i}-c_{1}(L_{i})\cdot\mathfrak D}(az)}
\right) \\ 
  & \times \exp\left(-\sum_{j\in\sigma}G_{0}^{(j)}\left(-z(\lambda_{j}-c_{1}(L_{i})\cdot\mathfrak D),z\right)\right)1_{\sum_{i\not\in \sigma}\lambda_{i}b^{i}_\sigma} 
\end{align*}
lies on the untwisted Lagrangian cone $\mathcal{L}$ 
of $\mathcal{P}_{\sigma}$. 
On the other hand, 
Tseng's orbifold quantum Riemann-Roch operator 
for $\oplus_{j\in\sigma}U_{j}(\sigma)$ is of the form:
\[
\Delta_{tw}:=
\bigoplus\limits_{b\in \bbox(\sigma)}
\exp\left(\sum\limits_{j\in\sigma}G_{b_{j}}^{(j)}(0,z)\right)
\]
This operator $\Delta_{tw}$ maps the untwisted Lagrangian cone $\mathcal{L}$ 
to the twisted Lagrangian cone $\mathcal{L}^{tw}$. Therefore

\begin{align*}
 \Delta_{tw}\textbf{L}=
& \sum\limits_{\mathfrak D\in \NE(B)}
\sum\limits_{\substack{\lambda\in \Lambda^{S}_{\sigma}\\ \lambda_{i}\in\mathbb{Z} \text{ if } i \not\in\sigma}}
Q^\mathfrak D J_{\mathfrak D}(z,\tau)
\left(
 \prod\limits_{i \not\in \sigma}
\frac{q_{i}^{\lambda_{i}-c_{1}(L_{i})\cdot\mathfrak D}\prod_{\langle a\rangle =0,a\leq 0}(az)}
{\prod_{\langle a\rangle =0,a\leq \lambda_{i}-c_{1}(L_{i})\cdot\mathfrak D}(az)}
\right)
\\
&\times
\exp
\left(
-\sum_{j\in\sigma}\left(G_{b_{j}}^{(j)}(0,z)-G_{0}^{(j)}(-z(\lambda_{j}-c_{1}(L_{j})\cdot\mathfrak D),z)\right)
\right)
1_{\sum_{i\not\in \sigma}\lambda_{i}b^{i}_\sigma}\\
= & \sum\limits_{\mathfrak D\in \NE(B)}
\sum\limits_{\substack{\lambda\in \Lambda^{S}_{\sigma}\\ \lambda_{i}\in\mathbb{Z} \text{ if } i \not\in\sigma}}
Q^\mathfrak D J_{\mathfrak D}(z,\tau)
\left(
 \prod\limits_{i \not\in \sigma}
\frac{q_{i}^{\lambda_{i}-c_{1}(L_{i})\cdot\mathfrak D}\prod_{\langle a\rangle =0,a\leq 0}(az)}
{\prod_{\langle a\rangle =0,a\leq \lambda_{i}-c_{1}(L_{i})\cdot\mathfrak D}(az)}
\right)
\\
&\times
\exp
\left(
-\sum_{j\in\sigma}\left(G_{0}^{(j)}(\langle -\lambda_{j} \rangle z,z)-G_{0}^{(j)}(-z(\lambda_{j}-c_{1}(L_{j})\cdot\mathfrak D),z)\right)
\right)
1_{\sum_{i\not\in \sigma}\lambda_{i}b^{i}_\sigma}\\
=& \sum\limits_{\mathfrak D\in \NE(B)}
\sum\limits_{\substack{\lambda\in \Lambda^{S}_{\sigma}\\ \lambda_{i}\in\mathbb{Z} \text{ if } i \not\in\sigma}}
Q^\mathfrak D J_{\mathfrak D}(z,\tau)
\left(
 \prod\limits_{i \not\in \sigma}
\frac{q_{i}^{\lambda_{i}-c_{1}(L_{i})\cdot\mathfrak D}\prod_{\langle a\rangle =0,a\leq 0}(az)}
{\prod_{\langle a\rangle =0,a\leq \lambda_{i}-c_{1}(L_{i})\cdot\mathfrak D}(az)}
\right)
\\
&\times
\left(
\prod\limits_{i \in \sigma}
\frac{\prod_{\langle a \rangle =\langle \lambda_{i}-c_{1}(L_{i})\cdot\mathfrak D\rangle,a\leq 0}\exp(-s^{(j)}(-az))}
{\prod_{\langle a\rangle =\langle \lambda_{i}-c_{1}(L_{i})\cdot\mathfrak D\rangle,a\leq \lambda_{i}-c_{1}(L_{i})\cdot\mathfrak D}\exp(-s^{j}(-az))}
\right)
1_{\sum_{i\not\in \sigma}\lambda_{i}b^{i}_\sigma}\\
=& \sum\limits_{\mathfrak D\in \NE(B)}
\sum\limits_{\substack{\lambda\in \Lambda^{S}_{\sigma}\\ \lambda_{i}\in\mathbb{Z} \text{ if } i \not\in\sigma}}
Q^\mathfrak D J_{\mathfrak D}(z,\tau)
\left(
 \prod\limits_{i \not\in \sigma}
\frac{q_{i}^{\lambda_{i}-c_{1}(L_{i})\cdot\mathfrak D}\prod_{\langle a\rangle =0,a\leq 0}(az)}
{\prod_{\langle a\rangle =0,a\leq \lambda_{i}-c_{1}(L_{i})\cdot\mathfrak D}(az)}
\right)
\\
&\times
\left( 
\prod\limits_{i \in \sigma}
\frac{\prod_{\langle a \rangle =\langle \lambda_{i}-c_{1}(L_{i})\cdot\mathfrak D\rangle,a\leq 0}(U_{i}(\sigma)+az)}
{\prod_{\langle a\rangle =\langle \lambda_{i}-c_{1}(L_{i})\cdot\mathfrak D\rangle,a\leq \lambda_{i}-c_{1}(L_{i})\cdot\mathfrak D}(U_{i}(\sigma)+az)}
\right)
1_{\sum_{i\not\in \sigma}\lambda_{i}b^{i}_\sigma}
= I^{S}_{\sigma} e^{-\sum^{n}_{i=1}U_{i}(\sigma)t_{i}/z}
\end{align*}
lies on $\mathcal{L}^{tw}$, where the second equation follows from (\ref{relation-G-j-1}), the third equation follows from (\ref{relation-G-j-2}) and fourth equation follows from (\ref{exp-s-j-k}).
This completes the proof of Theorem \ref{main-theorem}.

\end{document}